\newtheorem{theorem}{Theorem}
\newtheorem{lemma}{Lemma}
\newtheorem{property}{Property}
\newtheorem{corollary}{Corollary}
\title{\textbf{CHARACTERIZING ALTERNATING SIGN TRIANGLES}\thanks{This project was supported by the University of Minnesota's Office of Undergraduate Research.}}
\author{SON NGUYEN}
\affil{University of Minnesota - Twin Cities\\\href{mailto:nguy4309@umn.edu}{nguy4309@umn.edu}}
\date{}
\begin{document}
\setlength{\abovedisplayskip}{0pt}
\setlength{\belowdisplayskip}{0pt}
\setlength{\abovedisplayshortskip}{0pt}
\setlength{\belowdisplayshortskip}{0pt}
	
	\maketitle
	
	\begin{abstract}
	\justify
		Alternating sign triangles were introduced by Carroll and Speyer in relation to cube recurrence, by analogy to alternating sign matrices for octahedron recurrence. Permutation triangles are the alternating sign triangles whose entries are either $0$ or $1$, by analogy with permutation matrices. In this paper, we prove a simple characterization of permutation triangles, originally conjectured by Glick. We will also prove some properties of alternating sign triangles.

		\textit{Keywords:} Alternating sign triangle, cube recurrence
	\end{abstract}
	
	\section{Introduction}
	
	\justify
	The \textit{octahedron recurrence} is given by the initial conditions $f_{i,j,k}=x_{i,j,k}$ for $k = -1,0$ and
	
	\begin{align*}
	    f_{i,j,k-1}f_{i,j,k+1} &= f_{i-1,j,k}f_{i+1,j,k}+ \lambda f_{i,j-1,k}f_{i,j+1,k}. & (k\geq0)
	\end{align*}
	
	\justify
	An \textit{alternating sign matrix} is a square matrix that satisfies:
	
	\begin{enumerate}
	    \item all entries are $-1,0,\text{ or } 1$,
	    \item every row and column has sum $1$,
	    \item in every row and column the non-zero entries alternate in sign.
	\end{enumerate}
	
	\justify
	In \cite{robbins}, Robbins and Rumsey found that the exponents of the $x_{i,j,k}$ in any monomial in $f_{i_0,j_0,k_0}$ formed an alternating sign matrix.
	
	In \cite{propp}, James Propp introduced another recurrence called the \textit{cube recurrence}, which is given by $f_{i,j,k} = x_{i,j,k}$ for $i+j+k=-1,0,1$ and
	
	\begin{align*}
	    f_{i,j,k}f_{i-1,j-1,k-1}&=f_{i-1,j,k}f_{i,j-1,k-1}+f_{i,j-1,k}f_{i-1,j,k-1}+f_{i,j,k-1}f_{i-1,j,k-1}. & (i+j+k>1)
	\end{align*}
	
	\justify
	In \cite{carroll}, Carroll and Speyer introduced a combinatorial object that describe the cube recurrence called \textit{grove}, which will be defined rigorously in section 2. Carroll and Speyer proved that $f_{0,0,0}$ is a sum of Laurent monomials in the variables $x_{i,j,k}$, and in each monomial, the exponent of $x_{i,j,k}$ is either $-1,0,\text{ or } 1$. Carroll and Speyer also observed that the exponents of $x_{i,j,k}$ form a sort of alternating sign triangles. In section 3, we will give our definition for alternating sign triangles. In section 4, we will prove a simple characterization of \textit{permutation triangles}, which is a special case of alternating sign triangles in which there is no $-1$ entry. In section 5, we will prove some properties of alternating sign triangles.
    
    \textbf{Acknowledgments.} I thank Max Glick for his conjectures about permutaion triangles, and for his valuable comments on the first version of this paper, and I thank Pavlo Pylyavskyy for his helpful suggestions and continuous support.
	
	\section{Background}
	
	\justify
	In \cite{carroll}, Carroll and Speyer defined groves as follows.
	
	\justify
	Define the \textit{lower cone} of any $(i,j,k)\in \mathbb{Z}^{3}$ to be
	
	\begin{align*}
	    C(i,j,k) &= \left\{ (i',j',k')\in \mathbb{Z}^{3} | i'\leq i,j' \leq j,k'\leq k \right\}.
	\end{align*}
	
	\justify
    Let $\mathcal{L}\subseteq \mathbb{Z}^{3}$ be a subset such that, whenever $(i, j, k) \in \mathcal{L}, C(i, j, k) \subseteq \mathcal{L}$. Let $\mathcal{U} = \mathbb{Z}^{3}-\mathcal{L}$, and define the set of \textit{initial conditions}
    
    \begin{align*}
        \mathcal{I} = \left\{(i, j, k) \in \mathcal{L} | (i + 1, j + 1, k + 1) \in \mathcal{U}\right\}.
    \end{align*}
    
    \justify
    We also define a \textit{rhombus} to be any set of the form
    
    \begin{align*}
        r_{a}(i, j, k) &= \left\{(i, j, k),(i, j - 1, k),(i, j, k - 1),(i, j - 1, k - 1)\right\}\\
        r_{b}(i, j, k) &= \left\{(i, j, k),(i - 1, j, k),(i, j, k - 1),(i - 1, j, k - 1)\right\}\\
        r_{c}(i, j, k) &= \left\{(i, j, k),(i - 1, j, k),(i, j - 1, k),(i - 1, j - 1, k)\right\}
    \end{align*}
    
    \justify
    In addition, define the \textit{edges} of each rhombus to be the pairs
    
    \begin{align*}
        e_{a}(i, j, k) &= \left\{(i, j - 1, k),(i, j, k - 1)\right\} & e'_{a}(i, j, k) &= \left\{(i, j, k),(i, j - 1, k - 1)\right\}\\
        e_{b}(i, j, k) &= \left\{(i - 1, j, k),(i, j, k - 1)\right\} & e'_{b}(i, j, k) &= \left\{(i, j, k),(i - 1, j, k - 1)\right\}\\
        e_{c}(i, j, k) &= \left\{(i - 1, j, k),(i, j - 1, k)\right\} & e'_{c}(i, j, k) &= \left\{(i, j, k),(i - 1, j - 1, k)\right\}
    \end{align*}
    
    \justify
    Now suppose that $N$ is a cutoff for $\mathcal{I}$. Define $\mathcal{G}$ be a graph whose vertices are the points in $\mathcal{I}$ and edges are the edges of all rhombi occurring in $\mathcal{I}$. We define an $\mathcal{I}$-grove within radius $N$ to be a subgraph $G \subseteq \mathcal{G}$ with the following properties:
    
    \begin{itemize}
        \item (Completeness) the vertex set of $G$ is all of $\mathcal{I}$;
        \item (Complementarity) for every rhombus, exactly one of its two edges occurs in $G$;
        \item (Compactness) for every rhombus all of whose vertices satisfy $i+j+k<-N$, the short edge occurs in $G$;
        \item (Connectivity) every component of $G$ contains exactly one of the following sets of vertices, and conversely, each such set is contained in some component:
        \begin{itemize}
            \item $\left\{(0,p,q),(p,0,q)\right\},\left\{(p,q,0),(0,q,p)\right\}$, and $\left\{(q,0,p),(q,p,0)\right\}$ for all $p, q$ with $0>p>q$ and $p+q\in \{-N-1,-N-2\}$;
            \item $\left\{(0, p, p),(p, 0, p),(p, p, 0)\right\}$ for $2p\in \{-N-1,-N-2\}$;
            \item $\left\{(0, 0, q)\right\}, \left\{(0, q, 0)\right\}$, and $\left\{(q, 0, 0)\right\}$ for $q\leq-N-1$.
        \end{itemize}
    \end{itemize}
    
    \begin{figure}[h!]
        \centering
        \includegraphics[width=10cm]{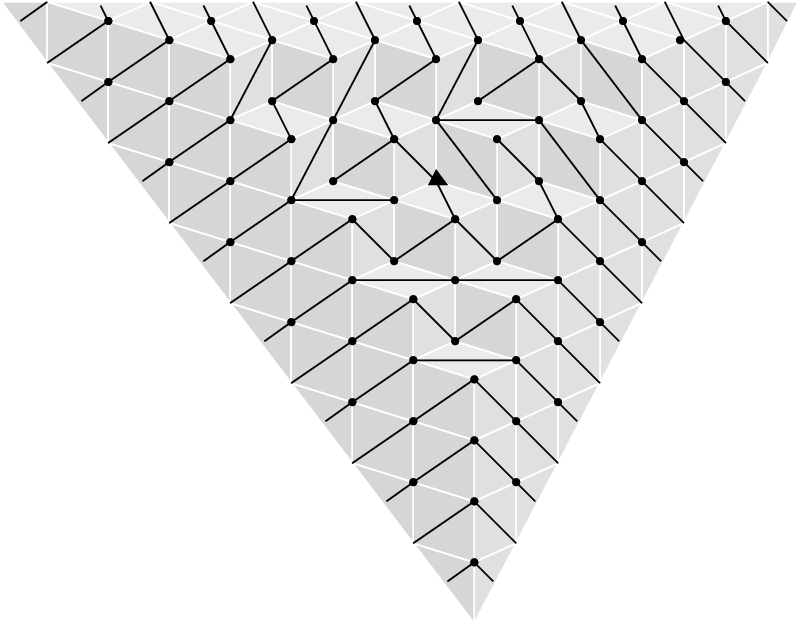}
        \caption{Example of a grove}
        \label{Figure 0}
        \floatfoot{Source: \cite{carroll}}
    \end{figure}
    
    \justify
    Carroll and Speyer also proved a bijection between groves and simplified groves. A simplified grove within radius $N$, where $N$ is a cutoff for $\mathcal{I}$ and furthermore is odd, is a subgraph $G'$ of $\mathcal{G}$ satisfying:
    
    \begin{itemize}
        \item (Vertex set) the vertex set of $G'$ is $\{(i,j,k)\in\mathcal{I} \text{ | } i+j+k\equiv0 \text{ mod } 2; i+j+k\geq-N-1\}$;
        \item (Acyclicity) $G'$ is acyclic;
        \item (Connectivity) every component of $G'$ contains exactly one of the following sets of vertices, and conversely, each such set is contained in some component:
        \begin{itemize}
            \item $\left\{(0,p,q),(p,0,q)\right\},\left\{(p,q,0),(0,q,p)\right\}$, and $\left\{(q,0,p),(q,p,0)\right\}$ for $p, q$ with $0>p>q$ and $p+q=-N-1$;
            \item $\left\{(0, \frac{-N-1}{2}, \frac{-N-1}{2}),(\frac{-N-1}{2}, 0, \frac{-N-1}{2}),(\frac{-N-1}{2}, \frac{-N-1}{2}, 0)\right\}$;
            \item $\left\{(0, 0, -N-1)\right\}, \left\{(0, -N-1, 0)\right\}$, and $\left\{(-N-1, 0, 0)\right\}$.
        \end{itemize}
    \end{itemize}
    
    \begin{figure}[h!]
        \centering
        \includegraphics[width=12cm]{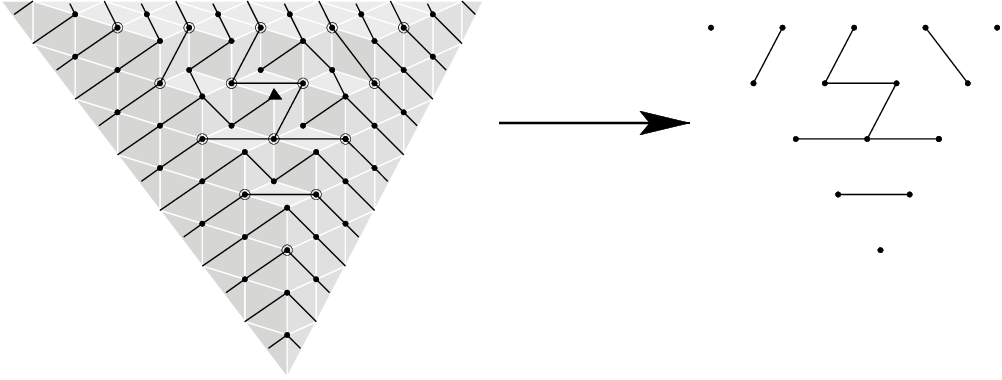}
        \caption{Example of a simplified grove}
        \label{Figure 1}
        \floatfoot{Source: \cite{carroll}}
    \end{figure}

	\justify
	\section{Definition}
	
	For our convenience, we will redefine a \textit{simplified grove of size $n$} to be a graph $G$ satisfying:
	
	\begin{itemize}
		\item (Vertex set) the vertex set of $G$ is $\{{(i,j)\in\mathbb{Z}^2\mid \lvert i+j\rvert\le n,j\leq 0,i+j\equiv n\bmod{2}}\}$;
		\item (Acyclicity) $G$ is acyclic;
		\item (Connectivity) the boundary vertices can be partitioned into the following sets so that each component of $G$ contains exactly one set, and conversely, each set is contained in some component:
		\begin{itemize}
		    \item $\{(-n,0)\},\{(n,0)\}$ and $\{(0,-n)\}$,
		    \item (west pairs) $\{(-i,0),(\frac{-n-i}{2},\frac{-n+i}{2})\}$ for $0<i<n, i\equiv n\bmod{2}$,
		    \item (east pairs) $\{(i,0),(\frac{n+i}{2},\frac{-n+i}{2})\}$ for $0<i<n, i\equiv n\bmod{2}$,
		    \item (south pairs) $\{(-n+i,-i),(n-i,-i\}$ for $\frac{n}{2}<i<n$,
		    \item (middle triplet) $\{(0,0),(-\frac{n}{2},-\frac{n}{2}),(\frac{n}{2},-\frac{n}{2})\}$ if $n$ is even.
		\end{itemize}
	\end{itemize}

	It can be checked that this new definition gives the same set of simplified groves as Carroll's. We also define a \textit{upward triangle of size $i$} to be a triangle whose vertices are $(a,b),(a-i,b-i),(a+i,b-i)$. Similarly, define an \textit{downward triangle of size $i$} to be a triangle whose vertices are $(a,b),(a-i,b+i),(a+i,b+i)$. For simplicity, we will refer to downward and upward triangle of size $1$ as downward and upward triangle respectively.
	
	Now assign to each downward triangle a number $1-e$ where $e$ is the number of edge in that triangle. From the acyclicity condition, we have the number of edge in every triangle is less than $3$; hence, the number assigned to each downward triangle is either $-1,0$ or $1$. Define an \textit{alternating sign triangle} to be a configuration of numbers generated by our process. It can be checked that this definition gives the same set of alternating sign triangles as Carroll's proposed definition does.
    
    \begin{figure}[h!]
        \centering
    
    \begin{tikzpicture}
        \filldraw[black] (0,0) circle (2pt);
        \filldraw[black] (1,0) circle (2pt);
        \filldraw[black] (2,0) circle (2pt);
        \filldraw[black] (3,0) circle (2pt);
        \filldraw[black] (4,0) circle (2pt);
        \filldraw[black] (0.5,-1) circle (2pt);
        \filldraw[black] (1.5,-1) circle (2pt);
        \filldraw[black] (2.5,-1) circle (2pt);
        \filldraw[black] (3.5,-1) circle (2pt);
        \filldraw[black] (1,-2) circle (2pt);
        \filldraw[black] (2,-2) circle (2pt);
        \filldraw[black] (3,-2) circle (2pt);
        \filldraw[black] (1.5,-3) circle (2pt);
        \filldraw[black] (2.5,-3) circle (2pt);
        \filldraw[black] (2,-4) circle (2pt);
        
        \draw[black, thick] (1,0) -- (0.5,-1);
        \draw[black, thick] (2,0) -- (2.5,-1);
        \draw[black, thick] (2.5,-1) -- (1.5,-1);
        \draw[black, thick] (1.5,-1) -- (1,-2);
        \draw[black, thick] (1.5,-1) -- (2,-2);
        \draw[black, thick] (2,-2) -- (3,-2);
        \draw[black, thick] (3,0) -- (3.5,-1);
        \draw[black, thick] (1.5,-3) -- (2.5,-3);
        
        \draw [-latex, thick](6,-2) -- (8,-2);
        
        \draw (10,-0.5) node[anchor=center] {0};
        \draw (11,-0.5) node[anchor=center] {1};
        \draw (12,-0.5) node[anchor=center] {0};
        \draw (13,-0.5) node[anchor=center] {0};
        \draw (10.5,-1.5) node[anchor=center] {0};
        \draw (11.5,-1.5) node[anchor=center] {-1};
        \draw (12.5,-1.5) node[anchor=center] {1};
        \draw (11,-2.5) node[anchor=center] {1};
        \draw (12,-2.5) node[anchor=center] {0};
        \draw (11.5,-3.5) node[anchor=center] {0};
        
    \end{tikzpicture}
        \caption{Getting alternating sign triangle of size 4 from a grove of size 4}
        \label{Figure 2}
    \end{figure}
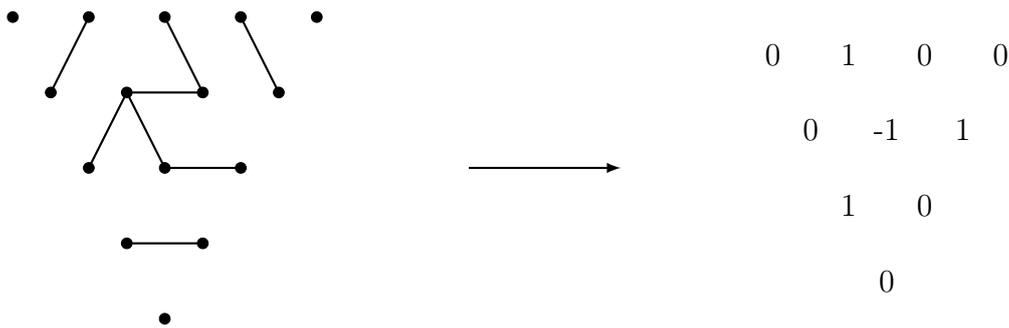
	
	\section{Permutation Triangle}
	
	Define \textit{permutation triangles} to be alternating sign triangles whose entries are either $0$ or $1$. We will prove some characteristics of permutation triangles. The proof will need one preliminary result:
	
	\begin{lemma}
		The sum of all entries in every alternating sign triangle of size $n$ is exactly $\lfloor\frac{n+1}{2}\rfloor$.
	\end{lemma}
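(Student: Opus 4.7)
The plan is to express the sum as a function of the number of edges in the underlying grove, then read that number off from the acyclicity and connectivity conditions. Writing $D=\binom{n+1}{2}$ for the number of size-$1$ downward triangles and $e_T$ for the number of grove edges inside such a triangle $T$, the sum equals
\[
S \;=\; \sum_T (1-e_T) \;=\; D \;-\; \sum_T e_T.
\]
The first step is the purely geometric claim that \emph{every unit edge of the triangular grid is a side of exactly one size-$1$ downward triangle}. For interior edges this is immediate, since each is shared between one downward and one upward small triangle. For the three sides of the big triangle one checks directly that the unique interior small triangle meeting each boundary edge is always downward: the three corner small triangles are downward, and so is the entire outermost layer of triangles running along each of the three sides. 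Granted this, the double count collapses to $\sum_T e_T = E_G$, where $E_G$ is the total number of edges of the grove.

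The second step uses the defining conditions of a simplified grove. Acyclicity makes the grove a forest on $V=\binom{n+2}{2}$ vertices, so $E_G = V - C$, where $C$ is the number of connected components. By the connectivity condition, $C$ is exactly the number of sets in the prescribed boundary partition. A short tally of these sets --- three corner singletons, $\lfloor (n-1)/2\rfloor$ west pairs, the same number of east pairs, $\lceil n/2\rceil - 1$ south pairs, and one middle triplet when $n$ is even --- gives $C = \tfrac{3n}{2}+1$ for even $n$ and $C = \tfrac{3(n+1)}{2}$ for odd $n$.

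Combining the two steps yields $S = \binom{n+1}{2} - \binom{n+2}{2} + C = C - (n+1)$, which evaluates to $n/2$ for even $n$ and $(n+1)/2$ for odd $n$, matching $\lfloor (n+1)/2\rfloor$ in either case. The main obstacle is the geometric claim of the first step: one must verify that the small triangle adjacent to \emph{each} boundary edge of the big triangle is downward (never upward), since this is precisely what makes the boundary correction in the double count disappear and lets $\sum_T e_T$ collapse to $E_G$ on the nose. Once this is in hand, the rest is routine component counting.
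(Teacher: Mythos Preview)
Your proof is correct and follows essentially the same approach as the paper: both compute the sum as (number of downward triangles) minus (number of grove edges), use that each edge lies in exactly one downward triangle, and then evaluate the edge count as $V-C$ from the forest structure and the prescribed boundary partition. Your component tally agrees with the paper's $3+\lfloor (3n-3)/2\rfloor$, and the only difference is that you spell out the boundary-edge check and the parity cases more explicitly than the paper does.
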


	\begin{proof}
		It can easily be seen that an alternating sign triangle of size $n$ is taken from a grove of size $n$ whose vertices are divided into $3+\lfloor\frac{3n-3}{2}\rfloor$ components. In each component, the number of edges is fewer than the number of vertices by $1$. Hence, the number of edges in a grove of size $n$ is $\frac{(n+1)(n+2)}{2}-3-\lfloor\frac{3n-3}{2}\rfloor$.\\
		Notice that in each downward triangle, the entry is defined as $1-e$, where $e$ is the number of edges, and every edge belongs to exactly one downward triangle. Therefore, the sum of all entries is exactly the difference between the number of downward triangles and the number of edges, which is
		
		\begin{align*}
		\frac{(n)(n+1)}{2}-\left(\frac{(n+1)(n+2)}{2}-3-\Big\lfloor\frac{3n-3}{2}\Big\rfloor\right)=\Big\lfloor\frac{n+1}{2}\Big\rfloor.
		\end{align*}
	\end{proof}

    \justify
	Now we are prepared for our first theorem which was conjectured by Max Glick in 2013.
	
	\begin{theorem}
		A configuration is a permutation triangle if and only if the following properties are satisfied:
		\begin{enumerate}
		    \item All entries are either $1,0$ or $-1$,
		    \item The sum of all entries is $\Big\lfloor\frac{n+1}{2}\Big\rfloor$,
			\item The top $i$ rows have at most $i$ $1$-s,
			\item The left-most $i$ columns have at most $i$ $1$-s,
			\item The right-most $i$ columns have at most $i$ $1$-s,
			\item Any upward triangle of size $i$ has at most $i$ $1$-s.
		\end{enumerate}
	\end{theorem}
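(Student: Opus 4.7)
The plan is to establish the two directions of the equivalence separately.

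For necessity (permutation triangle $\Rightarrow$ (1)--(6)), properties (1) and (2) are immediate from the definition of a permutation triangle and from Lemma 1 respectively. For (3)--(6), I would use the following regional refinement of the identity in the proof of Lemma 1: for any subset $R$ of downward triangles,
\[
\sum_{t \in R}\, (\text{entry of } t) \;=\; |R| - |E_R|,
\]
where $E_R$ is the set of grove edges lying in some downward triangle of $R$ (this is well-defined because each grove edge belongs to exactly one downward triangle). For a permutation triangle the entries are $0$ or $1$, so the left-hand side equals the number of $1$s in $R$, and each bound of the form ``at most $i$ ones'' translates into a lower bound $|E_R|\ge |R|-i$. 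I would derive these lower bounds from the grove's acyclicity together with its prescribed partition of boundary vertices into components: in each of the four region types, the dictated component structure forces a spanning-forest-like density of edges inside $R$.

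For sufficiency ((1)--(6) $\Rightarrow$ permutation triangle), I would proceed by induction on $n$. Conditions (3), (4), (5) tightly constrain the behavior along the top row and the leftmost and rightmost columns; I plan to use them to show that the outermost layer of rhombi has its grove edges forced, allowing me to peel off an outer layer and reduce to an instance of the same problem on a triangle of strictly smaller size. To rule out $-1$ entries (equivalently, downward triangles with two grove edges), I would argue by contradiction: a $-1$ entry at a downward triangle $t$, combined with the alternating sign structure and the global sum (2), forces a cluster of $1$ entries adjacent to $t$; aggregating them inside a suitably chosen upward triangle $U$ of size $i$ yields at least $i+1$ ones in $U$, contradicting (6).

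The main obstacle I expect is the sufficiency direction, specifically the inductive reduction. After peeling off an outer layer one must verify that the remaining inner configuration still satisfies the analogues of (1)--(6) for the smaller size, and that the intricate connectivity partition of boundary vertices in a simplified grove is consistent after this surgery. A secondary difficulty is calibrating the contradiction argument used to rule out $-1$ entries, since the size $i$ of the enclosing upward triangle must be chosen carefully so that $U$ contains all the $1$s forced by the local alternating pattern without swelling $|U|$ so much that the bound $i+1$ fails to give a contradiction; this requires a case analysis of how a $-1$ interacts with its neighbors in an alternating sign triangle.
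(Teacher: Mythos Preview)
Your necessity argument is essentially the paper's: the identity $\sum_{t\in R}(\text{entry of }t)=|R|-|E_R|$ is exactly what the paper uses (implicitly) to turn the connectivity constraints of the grove into edge lower bounds, and hence upper bounds on the number of $1$'s in the four region types.

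Your sufficiency argument, however, diverges sharply from the paper's and has real gaps. The paper does \emph{not} argue by induction or peel off a layer. Instead it gives a direct construction: it writes the size $n-1$ configuration into the \emph{upward} triangles of a size-$n$ template, draws all three edges of each upward $1$-triangle, and then runs an explicit three-step procedure to connect the west/east/south boundary pairs one at a time. Properties (3)--(6) are used, via counting contradictions, to show that each successive path can be routed without colliding with the previous ones. Finally a dual ``red'' grove is built on the centers of the downward triangles, and this red graph is the grove realizing the configuration. Nowhere is there a separate step that rules out $-1$ entries; the construction simply succeeds, and the red grove has $0$ or $1$ edge in each downward triangle.

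Two concrete problems with your plan. First, when you write that a $-1$ entry ``combined with the alternating sign structure'' forces a cluster of nearby $1$'s, you are invoking structure that is not in the hypotheses: conditions (1)--(6) are purely numerical constraints on an abstract array, with no a~priori alternation. The only leverage against $-1$'s is that (2) forces one extra $1$ globally per $-1$, and there is no evident mechanism that concentrates those extra $1$'s into a single upward triangle of the right size to contradict (6). Second, your inductive step speaks of the ``outermost layer of rhombi'' having its ``grove edges forced,'' but at this stage there is no grove --- that is precisely what you are trying to produce --- so the language is circular. Even if the inner configuration of size $n{-}2$ satisfied (1)--(6) and were, by induction, a permutation triangle realized by some grove $G'$, you would still need to extend $G'$ across the outer layer so that the boundary connectivity partition of a size-$n$ grove is met; this extension is the hard part and is exactly what the paper's three-step routing procedure accomplishes.
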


	\begin{proof}
		By \textit{lemma 1}, all permutation triangle have property $2$. Now, we will prove that every permutation triangle has property 3. Note that the sum of all entries is exactly $\lfloor\frac{n+1}{2}\rfloor$; therefore, we only need to prove this property for $i<\lfloor\frac{n+1}{2}\rfloor$.
		
		Consider a sub-graph whose vertices are those in the first $i+1$ rows of the grove and edges are those connecting such vertices, excluding edges connecting the vertices in row $i+1$. The connectivity conditions state that the vertices $(-n+2i,0),(-n+2i+2,0),...,(n-2i,0)$ have to be connected with some vertices in row $i+1$ and below. Hence, these vertices have to be connected with at least one vertex in row $i+1$. Therefore, at least $n+1-2i$ vertices in row $i+1$ have to be connected with a vertex in row $1$. Let $A$ be the set of such vertices, and $B$ be the set of vertices in row $i+1$ that does not belong to $A$. Since $\lvert A\rvert\geq n+1-2i$, and there are $n+1-i$ vertices in row $i+1$, $\lvert B\rvert \leq i$.
    
    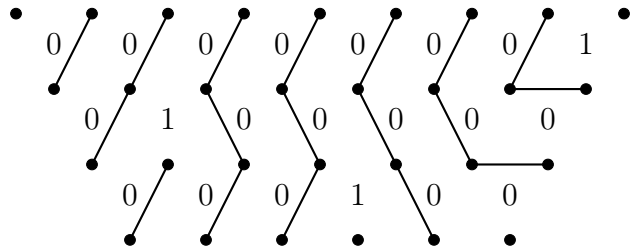
\begin{figure}[h!]
        \centering
    
    \begin{tikzpicture}
        \filldraw[black] (0,0) circle (2pt);
        \filldraw[black] (1,0) circle (2pt);
        \filldraw[black] (2,0) circle (2pt);
        \filldraw[black] (3,0) circle (2pt);
        \filldraw[black] (4,0) circle (2pt);
        \filldraw[black] (5,0) circle (2pt);
        \filldraw[black] (6,0) circle (2pt);
        \filldraw[black] (7,0) circle (2pt);
        \filldraw[black] (8,0) circle (2pt);
        \filldraw[black] (0.5,-1) circle (2pt);
        \filldraw[black] (1.5,-1) circle (2pt);
        \filldraw[black] (2.5,-1) circle (2pt);
        \filldraw[black] (3.5,-1) circle (2pt);
        \filldraw[black] (4.5,-1) circle (2pt);
        \filldraw[black] (5.5,-1) circle (2pt);
        \filldraw[black] (6.5,-1) circle (2pt);
        \filldraw[black] (7.5,-1) circle (2pt);
        \filldraw[black] (1,-2) circle (2pt);
        \filldraw[black] (2,-2) circle (2pt);
        \filldraw[black] (3,-2) circle (2pt);
        \filldraw[black] (4,-2) circle (2pt);
        \filldraw[black] (5,-2) circle (2pt);
        \filldraw[black] (6,-2) circle (2pt);
        \filldraw[black] (7,-2) circle (2pt);
        \filldraw[black] (1.5,-3) circle (2pt);
        \filldraw[black] (2.5,-3) circle (2pt);
        \filldraw[black] (3.5,-3) circle (2pt);
        \filldraw[black] (4.5,-3) circle (2pt);
        \filldraw[black] (5.5,-3) circle (2pt);
        \filldraw[black] (6.5,-3) circle (2pt);
        
        \draw[black, thick] (1,0) -- (0.5,-1);
        \draw[black, thick] (2,0) -- (1,-2);
        \draw[black, thick] (3,0) -- (2.5,-1);
        \draw[black, thick] (4,0) -- (3.5,-1);
        \draw[black, thick] (5,0) -- (4.5,-1);
        \draw[black, thick] (6,0) -- (5.5,-1);
        \draw[black, thick] (7,0) -- (6.5,-1);
        \draw[black, thick] (2.5,-1) -- (3,-2);
        \draw[black, thick] (3.5,-1) -- (4,-2);
        \draw[black, thick] (4.5,-1) -- (5.5,-3);
        \draw[black, thick] (5.5,-1) -- (6,-2);
        \draw[black, thick] (6.5,-1) -- (7.5,-1);
        \draw[black, thick] (2,-2) -- (1.5,-3);
        \draw[black, thick] (3,-2) -- (2.5,-3);
        \draw[black, thick] (4,-2) -- (3.5,-3);
        \draw[black, thick] (6,-2) -- (7,-2);
        
        \draw (0.5,-0.4) node[anchor=center] {0};
        \draw (1.5,-0.4) node[anchor=center] {0};
        \draw (2.5,-0.4) node[anchor=center] {0};
        \draw (3.5,-0.4) node[anchor=center] {0};
        \draw (4.5,-0.4) node[anchor=center] {0};
        \draw (5.5,-0.4) node[anchor=center] {0};
        \draw (6.5,-0.4) node[anchor=center] {0};
        \draw (7.5,-0.4) node[anchor=center] {1};
        \draw (1,-1.4) node[anchor=center] {0};
        \draw (2,-1.4) node[anchor=center] {1};
        \draw (3,-1.4) node[anchor=center] {0};
        \draw (4,-1.4) node[anchor=center] {0};
        \draw (5,-1.4) node[anchor=center] {0};
        \draw (6,-1.4) node[anchor=center] {0};
        \draw (7,-1.4) node[anchor=center] {0};
        \draw (1.5,-2.4) node[anchor=center] {0};
        \draw (2.5,-2.4) node[anchor=center] {0};
        \draw (3.5,-2.4) node[anchor=center] {0};
        \draw (4.5,-2.4) node[anchor=center] {1};
        \draw (5.5,-2.4) node[anchor=center] {0};
        \draw (6.5,-2.4) node[anchor=center] {0};
        
    \end{tikzpicture}
        \caption{Example with $i=3$}
        \label{Figure 3}
    \end{figure}
		
		The connectivity conditions also state that every vertex in the first $i$ rows has to be connected with either a vertex in row $1$, or a boundary vertex in row $i+1$ or below. Therefore, in our sub-graph, every vertex has to be connected with either a vertex in row $1$ or a vertex in $B$. This means that every components in the sub-graph has to contain at least a vertex in row $i$ or a vertex in $B$. Hence, we have at most $n+1+i$ components. Therefore, the minimum number of edges in the sub-graph is:
		
		\begin{align*}
		    \frac{(2n+2-i)(i+1)}{2}-(n+1+i)=ni+\frac{-i^2-i}{2}
		\end{align*}
		
		\justify
		On the other hand, the number of downward triangles in the sub-graph is:
		
		\begin{align*}
		    \frac{(n+n+1-i)(i)}{2}=ni+\frac{-i^2+i}{2}
		\end{align*}
		
		\justify
		Since each $0$-triangle gives $1$ edge while each $1$-triangle gives $0$ edge, the maximum number of $1$ is
		
		\begin{align*}
		    ni+\frac{-i^2+i}{2}-\left(ni+\frac{-i^2-i}{2}\right)=i
		\end{align*}
		
		\justify
		
		This completes our proof for property $3$. Due to symmetry, property $4$ and $5$ can be proved in the same way. Now we prove property $6$. Consider an upward triangle of size $i$, and let $C$ be the set of vertices inside the triangle. We have $\lvert C\rvert=\frac{i(i-1)}{2}$. Assume that the vertices in $C$ are divided into $k$ components, then the number of edges connecting the vertices in $C$ is $\frac{i(i-1)}{2}-k$. Since each component need to be connected with at least $1$ boundary vertex, we need at least $k$ more edges. Therefore, in the triangle, there are at least $\frac{i(i-1)}{2}$ edges. On the other hand, there are $\frac{i(i+1)}{2}$ downward triangles in the triangle. Hence, the maximum number of $1$ is $\frac{i(i+1)}{2}-\frac{i(i-1)}{2}=i$. This completes our proof for property $6$.
    
    \begin{figure}[h!]
        \centering
    
    \begin{tikzpicture}
        \filldraw[black] (3,0) circle (2pt);
        \filldraw[black] (4,0) circle (2pt);
        \filldraw[black] (2.5,-1) circle (2pt);
        \filldraw[black] (3.5,-1) circle (2pt);
        \filldraw[black] (4.5,-1) circle (2pt);
        \filldraw[black] (2,-2) circle (2pt);
        \filldraw[black] (3,-2) circle (2pt);
        \filldraw[black] (4,-2) circle (2pt);
        \filldraw[black] (5,-2) circle (2pt);
        \filldraw[black] (1.5,-3) circle (2pt);
        \filldraw[black] (2.5,-3) circle (2pt);
        \filldraw[black] (3.5,-3) circle (2pt);
        \filldraw[black] (4.5,-3) circle (2pt);
        \filldraw[black] (5.5,-3) circle (2pt);
        \filldraw[black] (2,-4) circle (2pt);
        \filldraw[black] (3,-4) circle (2pt);
        \filldraw[black] (4,-4) circle (2pt);
        \filldraw[black] (5,-4) circle (2pt);
        
        \draw[black, thick] (4,0) -- (3.5,-1);
        \draw[black, thick] (3.5,-1) -- (2.5,-1);
        \draw[black, thick] (3,-2) -- (4,-2);
        \draw[black, thick] (4.5,-3) -- (4,-2);
        \draw[black, thick] (1.5,-3) -- (2.5,-3);
        \draw[black, thick] (3.5,-3) -- (3,-4);
        \draw[black, thick] (4.5,-3) -- (5.5,-3);
        
        \draw (3.5,-0.4) node[anchor=center] {0};
        \draw (3,-1.4) node[anchor=center] {0};
        \draw (4,-1.4) node[anchor=center] {1};
        \draw (2.5,-2.4) node[anchor=center] {1};
        \draw (3.5,-2.4) node[anchor=center] {0};
        \draw (4.5,-2.4) node[anchor=center] {0};
        \draw (2,-3.4) node[anchor=center] {0};
        \draw (3,-3.4) node[anchor=center] {0};
        \draw (4,-3.4) node[anchor=center] {1};
        \draw (5,-3.4) node[anchor=center] {0};
        
    \end{tikzpicture}
        \caption{Example with $i=4$}
        \label{Figure 4}
    \end{figure}
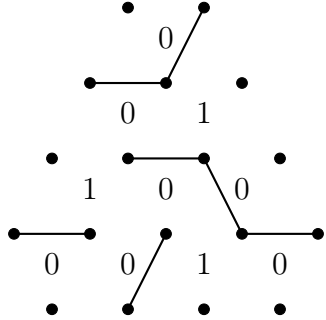
		
		\justify
		Now we will propose a method to construct a grove of size $n-1$ from a configuration of size $n-1$ that satisfies the six properties. First, we will assign our configuration to the upward triangles of a grove of size $n$. For each upward triangle that contains a $1$, we draw all three edges of that triangle. Now we connect the west pairs using the following procedure: assume that we can connect the pair $\{(-i,0),(\frac{-n-i}{2},\frac{-n+i}{2})\}$, \\
		\textit{step 1:} if there are two vertices $(a,b)$ and $(a-1,b-1)$ that are both connected with the pair $\{(-i,0),(\frac{-n-i}{2},\frac{-n+i}{2})\}$ and are not neighbors, we connect one of them with $(a+1,b-1)$. We repeat step 1 until no such pair exists, \\
		\textit{step 2:} if there is a vertex $(a,b)$ and some $i,j$ such that $(a,b)$ is not connected with the pair $\{(-i,0),(\frac{-n-i}{2},\frac{-n+i}{2})\}$ but $(a-i,b)$ and $(a+j,b)$ are, we connect $(a,b)$ with $(a+1,b+1)$; similarly, if there is a vertex $(a,b)$ and some $i,j$ such that $(a,b)$ is not connected with the pair $\{(-i,0),(\frac{-n-i}{2},\frac{-n+i}{2})\}$ but $(a-i,b+i)$ and $(a+j,b-j)$ are, we connect $(a,b)$ with $(a-1,b-1)$. We repeat step 2 until no such vertex exists, \\
		\textit{step 3:} we connect the pair $\{(-i+2,0),(\frac{-n-i+2}{2},\frac{-n+i-2}{2})\}$ such that there is no unconnected vertex in the region between the path and the component of the pair $\{(-i,0),(\frac{-n-i}{2},\frac{-n+i}{2})\}$. We will prove that this can be done.
		
	\begin{figure}[h!]
		
     \centering
        \begin{subfigure}[b]{0.2\textwidth}
            \centering
    
    \resizebox{\textwidth}{!}{
    \begin{tikzpicture}
        \filldraw[black] (0,0) circle (2pt);
        \filldraw[black] (1,0) circle (2pt);
        \filldraw[black] (2,0) circle (2pt);
        \filldraw[black] (3,0) circle (2pt);
        \filldraw[black] (4,0) circle (2pt);
        \filldraw[black] (5,0) circle (2pt);
        \filldraw[black] (6,0) circle (2pt);
        \filldraw[black] (7,0) circle (2pt);
        \filldraw[black] (0.5,-1) circle (2pt);
        \filldraw[black] (1.5,-1) circle (2pt);
        \filldraw[black] (2.5,-1) circle (2pt);
        \filldraw[black] (3.5,-1) circle (2pt);
        \filldraw[black] (4.5,-1) circle (2pt);
        \filldraw[black] (5.5,-1) circle (2pt);
        \filldraw[black] (6.5,-1) circle (2pt);
        \filldraw[black] (1,-2) circle (2pt);
        \filldraw[black] (2,-2) circle (2pt);
        \filldraw[black] (3,-2) circle (2pt);
        \filldraw[black] (4,-2) circle (2pt);
        \filldraw[black] (5,-2) circle (2pt);
        \filldraw[black] (6,-2) circle (2pt);
        \filldraw[black] (1.5,-3) circle (2pt);
        \filldraw[black] (2.5,-3) circle (2pt);
        \filldraw[black] (3.5,-3) circle (2pt);
        \filldraw[black] (4.5,-3) circle (2pt);
        \filldraw[black] (5.5,-3) circle (2pt);
        \filldraw[black] (2,-4) circle (2pt);
        \filldraw[black] (3,-4) circle (2pt);
        \filldraw[black] (4,-4) circle (2pt);
        \filldraw[black] (5,-4) circle (2pt);
        \filldraw[black] (2.5,-5) circle (2pt);
        \filldraw[black] (3.5,-5) circle (2pt);
        \filldraw[black] (4.5,-5) circle (2pt);
        \filldraw[black] (3,-6) circle (2pt);
        \filldraw[black] (4,-6) circle (2pt);
        \filldraw[black] (3.5,-7) circle (2pt);
        
        \draw[black, thick] (1,0) -- (0.5,-1);
        \draw[black, thick] (2,0) -- (1,-2);
        \draw[black, thick] (3,0) -- (1.5,-3);
        \draw[black, thick] (3,0) -- (3.5,-1);
        \draw[black, thick] (2.5,-1) -- (3.5,-1);
        \draw[black, thick] (2.5,-1) -- (3,-2);
        \draw[black, thick] (2,-2) -- (3,-2);
        \draw[black, thick] (4,-2) -- (3.5,-3);
        \draw[black, thick] (4,-2) -- (4.5,-3);
        \draw[black, thick] (4.5,-3) -- (3.5,-3);
        
        \draw (1,-0.6) node[anchor=center] {0};
        \draw (2,-0.6) node[anchor=center] {0};
        \draw (3,-0.6) node[anchor=center] {1};
        \draw (4,-0.6) node[anchor=center] {0};
        \draw (5,-0.6) node[anchor=center] {0};
        \draw (6,-0.6) node[anchor=center] {0};
        \draw (1.5,-1.6) node[anchor=center] {0};
        \draw (2.5,-1.6) node[anchor=center] {1};
        \draw (3.5,-1.6) node[anchor=center] {0};
        \draw (4.5,-1.6) node[anchor=center] {0};
        \draw (5.5,-1.6) node[anchor=center] {0};
        \draw (2,-2.6) node[anchor=center] {0};
        \draw (3,-2.6) node[anchor=center] {0};
        \draw (4,-2.6) node[anchor=center] {1};
        \draw (5,-2.6) node[anchor=center] {0};
        \draw (2.5,-3.6) node[anchor=center] {0};
        \draw (3.5,-3.6) node[anchor=center] {0};
        \draw (4.5,-3.6) node[anchor=center] {0};
        \draw (3,-4.6) node[anchor=center] {0};
        \draw (4,-4.6) node[anchor=center] {0};
        \draw (3.5,-5.6) node[anchor=center] {0};
        
    \end{tikzpicture}
    }
            \caption{Before step 1}
        \end{subfigure}
     \hfill
        \begin{subfigure}[b]{0.2\textwidth}
            \centering
    
    \resizebox{\textwidth}{!}{
    \begin{tikzpicture}
        \filldraw[black] (0,0) circle (2pt);
        \filldraw[black] (1,0) circle (2pt);
        \filldraw[black] (2,0) circle (2pt);
        \filldraw[black] (3,0) circle (2pt);
        \filldraw[black] (4,0) circle (2pt);
        \filldraw[black] (5,0) circle (2pt);
        \filldraw[black] (6,0) circle (2pt);
        \filldraw[black] (7,0) circle (2pt);
        \filldraw[black] (0.5,-1) circle (2pt);
        \filldraw[black] (1.5,-1) circle (2pt);
        \filldraw[black] (2.5,-1) circle (2pt);
        \filldraw[black] (3.5,-1) circle (2pt);
        \filldraw[black] (4.5,-1) circle (2pt);
        \filldraw[black] (5.5,-1) circle (2pt);
        \filldraw[black] (6.5,-1) circle (2pt);
        \filldraw[black] (1,-2) circle (2pt);
        \filldraw[black] (2,-2) circle (2pt);
        \filldraw[black] (3,-2) circle (2pt);
        \filldraw[black] (4,-2) circle (2pt);
        \filldraw[black] (5,-2) circle (2pt);
        \filldraw[black] (6,-2) circle (2pt);
        \filldraw[black] (1.5,-3) circle (2pt);
        \filldraw[black] (2.5,-3) circle (2pt);
        \filldraw[black] (3.5,-3) circle (2pt);
        \filldraw[black] (4.5,-3) circle (2pt);
        \filldraw[black] (5.5,-3) circle (2pt);
        \filldraw[black] (2,-4) circle (2pt);
        \filldraw[black] (3,-4) circle (2pt);
        \filldraw[black] (4,-4) circle (2pt);
        \filldraw[black] (5,-4) circle (2pt);
        \filldraw[black] (2.5,-5) circle (2pt);
        \filldraw[black] (3.5,-5) circle (2pt);
        \filldraw[black] (4.5,-5) circle (2pt);
        \filldraw[black] (3,-6) circle (2pt);
        \filldraw[black] (4,-6) circle (2pt);
        \filldraw[black] (3.5,-7) circle (2pt);
        
        \draw[black, thick] (1,0) -- (0.5,-1);
        \draw[black, thick] (2,0) -- (1,-2);
        \draw[black, thick] (3,0) -- (1.5,-3);
        \draw[black, thick] (3,0) -- (3.5,-1);
        \draw[black, thick] (4,-2) -- (3.5,-1);
        \draw[black, thick] (2.5,-1) -- (3.5,-1);
        \draw[black, thick] (2.5,-1) -- (3,-2);
        \draw[black, thick] (2,-2) -- (3,-2);
        \draw[black, thick] (4,-2) -- (3.5,-3);
        \draw[black, thick] (4,-2) -- (4.5,-3);
        \draw[black, thick] (4.5,-3) -- (3.5,-3);
        
        \draw (1,-0.6) node[anchor=center] {0};
        \draw (2,-0.6) node[anchor=center] {0};
        \draw (3,-0.6) node[anchor=center] {1};
        \draw (4,-0.6) node[anchor=center] {0};
        \draw (5,-0.6) node[anchor=center] {0};
        \draw (6,-0.6) node[anchor=center] {0};
        \draw (1.5,-1.6) node[anchor=center] {0};
        \draw (2.5,-1.6) node[anchor=center] {1};
        \draw (3.5,-1.6) node[anchor=center] {0};
        \draw (4.5,-1.6) node[anchor=center] {0};
        \draw (5.5,-1.6) node[anchor=center] {0};
        \draw (2,-2.6) node[anchor=center] {0};
        \draw (3,-2.6) node[anchor=center] {0};
        \draw (4,-2.6) node[anchor=center] {1};
        \draw (5,-2.6) node[anchor=center] {0};
        \draw (2.5,-3.6) node[anchor=center] {0};
        \draw (3.5,-3.6) node[anchor=center] {0};
        \draw (4.5,-3.6) node[anchor=center] {0};
        \draw (3,-4.6) node[anchor=center] {0};
        \draw (4,-4.6) node[anchor=center] {0};
        \draw (3.5,-5.6) node[anchor=center] {0};
        
    \end{tikzpicture}
    }
            \caption{After step 1}
        \end{subfigure}
     \hfill
        \begin{subfigure}[b]{0.2\textwidth}
            \centering
    
    \resizebox{\textwidth}{!}{
    \begin{tikzpicture}
        \filldraw[black] (0,0) circle (2pt);
        \filldraw[black] (1,0) circle (2pt);
        \filldraw[black] (2,0) circle (2pt);
        \filldraw[black] (3,0) circle (2pt);
        \filldraw[black] (4,0) circle (2pt);
        \filldraw[black] (5,0) circle (2pt);
        \filldraw[black] (6,0) circle (2pt);
        \filldraw[black] (7,0) circle (2pt);
        \filldraw[black] (0.5,-1) circle (2pt);
        \filldraw[black] (1.5,-1) circle (2pt);
        \filldraw[black] (2.5,-1) circle (2pt);
        \filldraw[black] (3.5,-1) circle (2pt);
        \filldraw[black] (4.5,-1) circle (2pt);
        \filldraw[black] (5.5,-1) circle (2pt);
        \filldraw[black] (6.5,-1) circle (2pt);
        \filldraw[black] (1,-2) circle (2pt);
        \filldraw[black] (2,-2) circle (2pt);
        \filldraw[black] (3,-2) circle (2pt);
        \filldraw[black] (4,-2) circle (2pt);
        \filldraw[black] (5,-2) circle (2pt);
        \filldraw[black] (6,-2) circle (2pt);
        \filldraw[black] (1.5,-3) circle (2pt);
        \filldraw[black] (2.5,-3) circle (2pt);
        \filldraw[black] (3.5,-3) circle (2pt);
        \filldraw[black] (4.5,-3) circle (2pt);
        \filldraw[black] (5.5,-3) circle (2pt);
        \filldraw[black] (2,-4) circle (2pt);
        \filldraw[black] (3,-4) circle (2pt);
        \filldraw[black] (4,-4) circle (2pt);
        \filldraw[black] (5,-4) circle (2pt);
        \filldraw[black] (2.5,-5) circle (2pt);
        \filldraw[black] (3.5,-5) circle (2pt);
        \filldraw[black] (4.5,-5) circle (2pt);
        \filldraw[black] (3,-6) circle (2pt);
        \filldraw[black] (4,-6) circle (2pt);
        \filldraw[black] (3.5,-7) circle (2pt);
        
        \draw[black, thick] (1,0) -- (0.5,-1);
        \draw[black, thick] (2,0) -- (1,-2);
        \draw[black, thick] (3,0) -- (1.5,-3);
        \draw[black, thick] (3,0) -- (3.5,-1);
        \draw[black, thick] (2.5,-1) -- (3.5,-1);
        \draw[black, thick] (2.5,-1) -- (3,-2);
        \draw[black, thick] (2,-2) -- (3,-2);
        \draw[black, thick] (4,-2) -- (3.5,-3);
        \draw[black, thick] (4,-2) -- (4.5,-3);
        \draw[black, thick] (4.5,-3) -- (3.5,-3);
        \draw[black, thick] (4,-2) -- (3.5,-1);
        \draw[black, thick] (3,-2) -- (2.5,-3);
        
        \draw (1,-0.6) node[anchor=center] {0};
        \draw (2,-0.6) node[anchor=center] {0};
        \draw (3,-0.6) node[anchor=center] {1};
        \draw (4,-0.6) node[anchor=center] {0};
        \draw (5,-0.6) node[anchor=center] {0};
        \draw (6,-0.6) node[anchor=center] {0};
        \draw (1.5,-1.6) node[anchor=center] {0};
        \draw (2.5,-1.6) node[anchor=center] {1};
        \draw (3.5,-1.6) node[anchor=center] {0};
        \draw (4.5,-1.6) node[anchor=center] {0};
        \draw (5.5,-1.6) node[anchor=center] {0};
        \draw (2,-2.6) node[anchor=center] {0};
        \draw (3,-2.6) node[anchor=center] {0};
        \draw (4,-2.6) node[anchor=center] {1};
        \draw (5,-2.6) node[anchor=center] {0};
        \draw (2.5,-3.6) node[anchor=center] {0};
        \draw (3.5,-3.6) node[anchor=center] {0};
        \draw (4.5,-3.6) node[anchor=center] {0};
        \draw (3,-4.6) node[anchor=center] {0};
        \draw (4,-4.6) node[anchor=center] {0};
        \draw (3.5,-5.6) node[anchor=center] {0};
        
    \end{tikzpicture}
    }
            \caption{After step 2}
        \end{subfigure}
     \hfill
        \begin{subfigure}[b]{0.2\textwidth}
            \centering
    
    \resizebox{\textwidth}{!}{
    \begin{tikzpicture}
        \filldraw[black] (0,0) circle (2pt);
        \filldraw[black] (1,0) circle (2pt);
        \filldraw[black] (2,0) circle (2pt);
        \filldraw[black] (3,0) circle (2pt);
        \filldraw[black] (4,0) circle (2pt);
        \filldraw[black] (5,0) circle (2pt);
        \filldraw[black] (6,0) circle (2pt);
        \filldraw[black] (7,0) circle (2pt);
        \filldraw[black] (0.5,-1) circle (2pt);
        \filldraw[black] (1.5,-1) circle (2pt);
        \filldraw[black] (2.5,-1) circle (2pt);
        \filldraw[black] (3.5,-1) circle (2pt);
        \filldraw[black] (4.5,-1) circle (2pt);
        \filldraw[black] (5.5,-1) circle (2pt);
        \filldraw[black] (6.5,-1) circle (2pt);
        \filldraw[black] (1,-2) circle (2pt);
        \filldraw[black] (2,-2) circle (2pt);
        \filldraw[black] (3,-2) circle (2pt);
        \filldraw[black] (4,-2) circle (2pt);
        \filldraw[black] (5,-2) circle (2pt);
        \filldraw[black] (6,-2) circle (2pt);
        \filldraw[black] (1.5,-3) circle (2pt);
        \filldraw[black] (2.5,-3) circle (2pt);
        \filldraw[black] (3.5,-3) circle (2pt);
        \filldraw[black] (4.5,-3) circle (2pt);
        \filldraw[black] (5.5,-3) circle (2pt);
        \filldraw[black] (2,-4) circle (2pt);
        \filldraw[black] (3,-4) circle (2pt);
        \filldraw[black] (4,-4) circle (2pt);
        \filldraw[black] (5,-4) circle (2pt);
        \filldraw[black] (2.5,-5) circle (2pt);
        \filldraw[black] (3.5,-5) circle (2pt);
        \filldraw[black] (4.5,-5) circle (2pt);
        \filldraw[black] (3,-6) circle (2pt);
        \filldraw[black] (4,-6) circle (2pt);
        \filldraw[black] (3.5,-7) circle (2pt);
        
        \draw[black, thick] (1,0) -- (0.5,-1);
        \draw[black, thick] (2,0) -- (1,-2);
        \draw[black, thick] (3,0) -- (1.5,-3);
        \draw[black, thick] (3,0) -- (3.5,-1);
        \draw[black, thick] (2.5,-1) -- (3.5,-1);
        \draw[black, thick] (2.5,-1) -- (3,-2);
        \draw[black, thick] (2,-2) -- (3,-2);
        \draw[black, thick] (4,-2) -- (3.5,-3);
        \draw[black, thick] (4,-2) -- (4.5,-3);
        \draw[black, thick] (4.5,-3) -- (3.5,-3);
        \draw[black, thick] (4,-2) -- (3.5,-1);
        \draw[black, thick] (3,-2) -- (2.5,-3);
        \draw[black, thick] (4,0) -- (5.5,-3);
        \draw[black, thick] (5,-4) -- (5.5,-3);
        \draw[black, thick] (5,-4) -- (2,-4);
        
        \draw (1,-0.6) node[anchor=center] {0};
        \draw (2,-0.6) node[anchor=center] {0};
        \draw (3,-0.6) node[anchor=center] {1};
        \draw (4,-0.6) node[anchor=center] {0};
        \draw (5,-0.6) node[anchor=center] {0};
        \draw (6,-0.6) node[anchor=center] {0};
        \draw (1.5,-1.6) node[anchor=center] {0};
        \draw (2.5,-1.6) node[anchor=center] {1};
        \draw (3.5,-1.6) node[anchor=center] {0};
        \draw (4.5,-1.6) node[anchor=center] {0};
        \draw (5.5,-1.6) node[anchor=center] {0};
        \draw (2,-2.6) node[anchor=center] {0};
        \draw (3,-2.6) node[anchor=center] {0};
        \draw (4,-2.6) node[anchor=center] {1};
        \draw (5,-2.6) node[anchor=center] {0};
        \draw (2.5,-3.6) node[anchor=center] {0};
        \draw (3.5,-3.6) node[anchor=center] {0};
        \draw (4.5,-3.6) node[anchor=center] {0};
        \draw (3,-4.6) node[anchor=center] {0};
        \draw (4,-4.6) node[anchor=center] {0};
        \draw (3.5,-5.6) node[anchor=center] {0};
        
    \end{tikzpicture}
    }
            \caption{After step 3}
        \end{subfigure}
        
        \caption{Three steps illustration}
        \label{Figure 5}
        
    \end{figure}
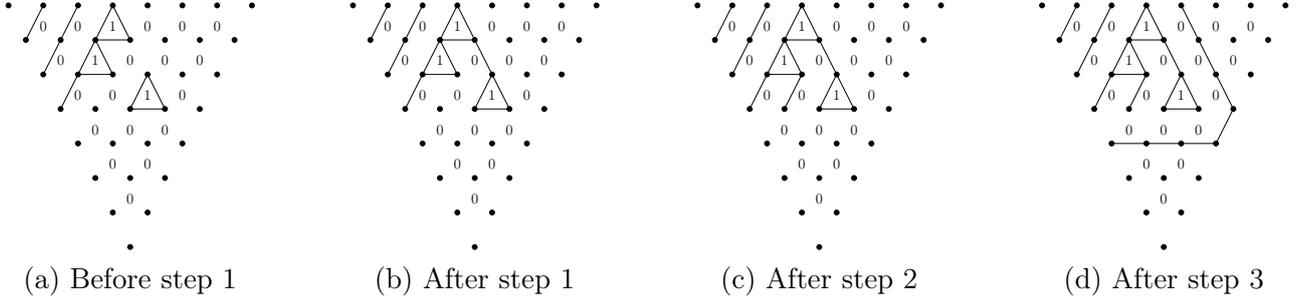
    
        Clearly, we can connect the pair $\{(-n+2,0),(-n+1,-1)\}$. Assume that we can connect the pair $\{(-i,0),(\frac{-n-i}{2},\frac{-n+i}{2})\}$, consider the vertices on the segment $s$ between the pair $\{(-i+2,0),(\frac{n-3i+4}{2},\frac{-n+i}{2})\}$.
        
        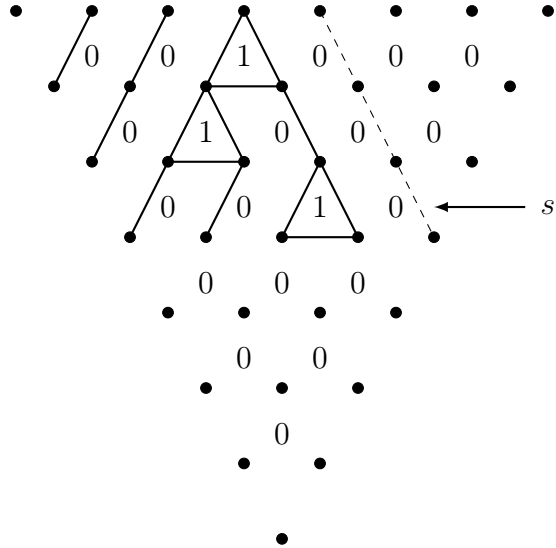
\begin{figure}[h!]
        \centering
    
    \begin{tikzpicture}
        \filldraw[black] (0,0) circle (2pt);
        \filldraw[black] (1,0) circle (2pt);
        \filldraw[black] (2,0) circle (2pt);
        \filldraw[black] (3,0) circle (2pt);
        \filldraw[black] (4,0) circle (2pt);
        \filldraw[black] (5,0) circle (2pt);
        \filldraw[black] (6,0) circle (2pt);
        \filldraw[black] (7,0) circle (2pt);
        \filldraw[black] (0.5,-1) circle (2pt);
        \filldraw[black] (1.5,-1) circle (2pt);
        \filldraw[black] (2.5,-1) circle (2pt);
        \filldraw[black] (3.5,-1) circle (2pt);
        \filldraw[black] (4.5,-1) circle (2pt);
        \filldraw[black] (5.5,-1) circle (2pt);
        \filldraw[black] (6.5,-1) circle (2pt);
        \filldraw[black] (1,-2) circle (2pt);
        \filldraw[black] (2,-2) circle (2pt);
        \filldraw[black] (3,-2) circle (2pt);
        \filldraw[black] (4,-2) circle (2pt);
        \filldraw[black] (5,-2) circle (2pt);
        \filldraw[black] (6,-2) circle (2pt);
        \filldraw[black] (1.5,-3) circle (2pt);
        \filldraw[black] (2.5,-3) circle (2pt);
        \filldraw[black] (3.5,-3) circle (2pt);
        \filldraw[black] (4.5,-3) circle (2pt);
        \filldraw[black] (5.5,-3) circle (2pt);
        \filldraw[black] (2,-4) circle (2pt);
        \filldraw[black] (3,-4) circle (2pt);
        \filldraw[black] (4,-4) circle (2pt);
        \filldraw[black] (5,-4) circle (2pt);
        \filldraw[black] (2.5,-5) circle (2pt);
        \filldraw[black] (3.5,-5) circle (2pt);
        \filldraw[black] (4.5,-5) circle (2pt);
        \filldraw[black] (3,-6) circle (2pt);
        \filldraw[black] (4,-6) circle (2pt);
        \filldraw[black] (3.5,-7) circle (2pt);
        
        \draw[black, thick] (1,0) -- (0.5,-1);
        \draw[black, thick] (2,0) -- (1,-2);
        \draw[black, thick] (3,0) -- (1.5,-3);
        \draw[black, thick] (3,0) -- (3.5,-1);
        \draw[black, thick] (2.5,-1) -- (3.5,-1);
        \draw[black, thick] (2.5,-1) -- (3,-2);
        \draw[black, thick] (2,-2) -- (3,-2);
        \draw[black, thick] (4,-2) -- (3.5,-3);
        \draw[black, thick] (4,-2) -- (4.5,-3);
        \draw[black, thick] (4.5,-3) -- (3.5,-3);
        \draw[black, thick] (4,-2) -- (3.5,-1);
        \draw[black, thick] (3,-2) -- (2.5,-3);
        
        \draw (1,-0.6) node[anchor=center] {0};
        \draw (2,-0.6) node[anchor=center] {0};
        \draw (3,-0.6) node[anchor=center] {1};
        \draw (4,-0.6) node[anchor=center] {0};
        \draw (5,-0.6) node[anchor=center] {0};
        \draw (6,-0.6) node[anchor=center] {0};
        \draw (1.5,-1.6) node[anchor=center] {0};
        \draw (2.5,-1.6) node[anchor=center] {1};
        \draw (3.5,-1.6) node[anchor=center] {0};
        \draw (4.5,-1.6) node[anchor=center] {0};
        \draw (5.5,-1.6) node[anchor=center] {0};
        \draw (2,-2.6) node[anchor=center] {0};
        \draw (3,-2.6) node[anchor=center] {0};
        \draw (4,-2.6) node[anchor=center] {1};
        \draw (5,-2.6) node[anchor=center] {0};
        \draw (2.5,-3.6) node[anchor=center] {0};
        \draw (3.5,-3.6) node[anchor=center] {0};
        \draw (4.5,-3.6) node[anchor=center] {0};
        \draw (3,-4.6) node[anchor=center] {0};
        \draw (4,-4.6) node[anchor=center] {0};
        \draw (3.5,-5.6) node[anchor=center] {0};

        \draw (7,-2.6) node[anchor=center] {$s$};
        \draw[black, dashed] (4,0) -- (5.5,-3);
        
        \draw [-latex, thick](6.7,-2.6) -- (5.5,-2.6);
        
    \end{tikzpicture}
        \caption{Segment $s$}
        \label{Figure 5b}
    \end{figure}
        
        If there are some vertices on $s$ that are connected to the pair $\{(-i,0),(\frac{-n-i}{2},\frac{-n+i}{2})\}$, let $k$ be the number of such vertices, and consider the region to the west of $s$. Also, let $l$ be the number of vertices in that region, but not on $s$, that are not connected to any pair in the region. Finally, for our convenience, let $a$ be the number of boundary pairs in the region. Then, the number of vertices connected to one of the boundary pairs in the region is
        
        $$(a+1)^{2}+k-l = a^2+2a+1+k-l$$
        
        \justify
        Now for a moment, assume that in each $1$ triangle, we only draw two out of three edges. This does not change the connectivity of our components; however, now, there is no cycle in our graph, which means the number of edges is exactly difference between the number of vertices and the number of components, which is
        
        $$a^2+2a+1+k-l-(a+1)=a^2+a+k-l$$
        
        \justify
        Now we count the number of triangles that have edges in the components. First, among the triangles that have no edge lying on $s$, since there are $l$ vertices that are not on $s$ and not connected to the pair $\{(-i,0),(\frac{-n-i}{2},\frac{-n+i}{2})\}$, at least $l$ of such triangles do not have edges in the components. Hence, among these triangles, there are at most $a^2-l$ triangles having edges in the components.
        
        Among the remaining triangles, since there are $k$ vertices on $s$ connected to the pair $\{(-i,0),(\frac{-n-i}{2},\frac{-n+i}{2})\}$, there are at most $k$ more triangles having edges in the components. However, consider the highest vertex on $s$ that are connected to the pair $\{(-i,0),(\frac{-n-i}{2},\frac{-n+i}{2})\}$, this vertex cannot be connected in \textit{step 1} or \textit{step 2}, it can only be connected through an $1$ triangle. This means that this vertex, and the vertex below it in $s$, are connected to the pair $\{(-i,0),(\frac{-n-i}{2},\frac{-n+i}{2})\}$ by the same triangle, which means there are only at most $k-1$ more triangles having edges in the components. Hence, in total, the maximum number of triangles having edges in the components is
        
        $$a^2-l+k-1$$
        
        \justify
        Since each edge is part of exactly $1$ upward triangle, and each $0$-triangle has exactly $1$ edge while each $1$-triangle has exactly $2$, the number of edges is the sum of the number of triangles and the number of $1$-triangles. Hence, the number of $1$-triangles is at least
        
        $$(a^2+a+k-l)-(a^2+k-l-1)=a+1$$
        
        \justify
        This is a contradiction since the triangles all lie on the first $a$ rows, which have at most $a$ $1$ triangles. Hence, there is no vertex connected to the pair $\{(-i,0),(\frac{-n-i}{2},\frac{-n+i}{2})\}$ on $s$. Similarly, there is no such vertex on the segment $s'$ between the pair $\{(\frac{-n-i+2}{2},\frac{-n+i-2}{2}),(\frac{n-3i+2}{2},\frac{-n+i-2}{2})\}$. Therefore, no matter how we connect the pair $\{(-i,0),(\frac{-n-i}{2},\frac{-n+i}{2})\}$, in the worst case, we can still connect the pair $\{(-i+2,0),(\frac{-n-i+2}{2},\frac{-n+i-2}{2})\}$ through $s$ and $s'$.
        
        Thus, we have proved that we can connect the west pairs. Similarly, we can connect the east and south pairs. Now we will prove that no two paths are connected.
        
        If $n$ is odd, and assume that the pairs $\{(-1,0),(\frac{-n-1}{2},\frac{-n+1}{2})\}$ and $\{(1,0),(\frac{n+1}{2},\frac{-n+1}{2})\}$ are connected, then \textit{step 2} assures that all vertices on the first $\frac{-n+1}{2}+1$ rows all belong to one of the west and east pairs. However, since the pairs $\{(-1,0),(\frac{-n-1}{2},\frac{-n+1}{2})\}$ and $\{(1,0),(\frac{n+1}{2},\frac{-n+1}{2})\}$ are connected, there are only at most $n$ components. Now again, for a moment, assume that in each $1$ triangle, we only draw two out of three edges, then the number of edges is at least $v-n$ edges where $v$ is the number of vertices on the first $\frac{-n+1}{2}+1$ rows. On the other hand, the number of triangles on the first $\frac{-n+1}{2}+1$ rows is $v-\left(\frac{n+1}{2}+1\right)-2\frac{n-1}{2}=v-n-\frac{n+1}{2}$. Hence, the number of $1$ triangles is at least
        
        $$(v-n) - \left(v-n-\frac{n+1}{2}\right) = \frac{n+1}{2}$$
        
        \justify
        which is a contradiction since these are all on the first $\frac{n-1}{2}$ rows. Therefore, the two pairs are not connected, and hence no two west and east pairs are connected. Similarly, no two west and south pairs or east and south pairs are connected. If $n$ is even, similarly, using the same process as above, we can prove that no two paths are connected, and we can also connect the middle triplet so that it is not connected with any path.
		
		Now mark the center of the downward triangles and color the new vertices red. We can see that the red vertices are divided into different regions by the edges we have drawn. Also, we will prove that every red vertex is in the same region with a boundary vertex. Assume there is a region that does not contain any boundary vertex, then this region has to be bounded by the edges we have drawn. If no boundary edge is connected to a boundary pair, then all boundary edges are edges of $1$-triangle, which contradicts property $6$. If any boundary edge is connected with a boundary pair, then using the same technique as above, we can prove that this contradicts one of the three properties $3,4,5$. Hence, every red vertex is in the same region with exactly one pair of boundary red vertices.
		
		Now we connect the red vertices by red edges so that each edge goes through exactly one upward triangle. This can be done easily for the west, east, and south triangles since \textit{step 1} and \textit{step 2} guarantees that each upward triangle has exactly two neighbour downward triangles in the same region, and we have proved that each downward triangle has at least one neighbour upward triangles in the same region. This also gives that in each west, east and south region, the number of upward triangles is fewer than that of downward triangles by one. Therefore, with some simple calculations, we can prove that in the middle region, the number of upward triangles is also fewer than that of downward triangles by one. We also have that each upward triangle has at least two neighbour downward triangles in the same region, and we have proved that each downward triangle has at least one neighbour upward triangles in the same region. Therefore, we can connect the red vertices in this region.
		
		Now we can see that the red vertices and edges form a grove of size $n-1$ and our configuration of numbers lie inside the downward triangles. Indeed, since each region contains exactly one boundary pair, each component of the new grove also contains exactly one boundary pair. Since each $0$-triangle has exactly one red edge going through it while each $1$-triangle has none, in the new grove, each $0$-triangle has exactly one edge it while each $1$-triangle has none. Last but not least, since in each region, every red vertices are connected using exactly one fewer edges (since there are one fewer upward triangle than downward triangle), the new grove is acyclic. Hence, we have finished constructing a grove from a configuration. This completes the proof.
		
	\end{proof}
		
	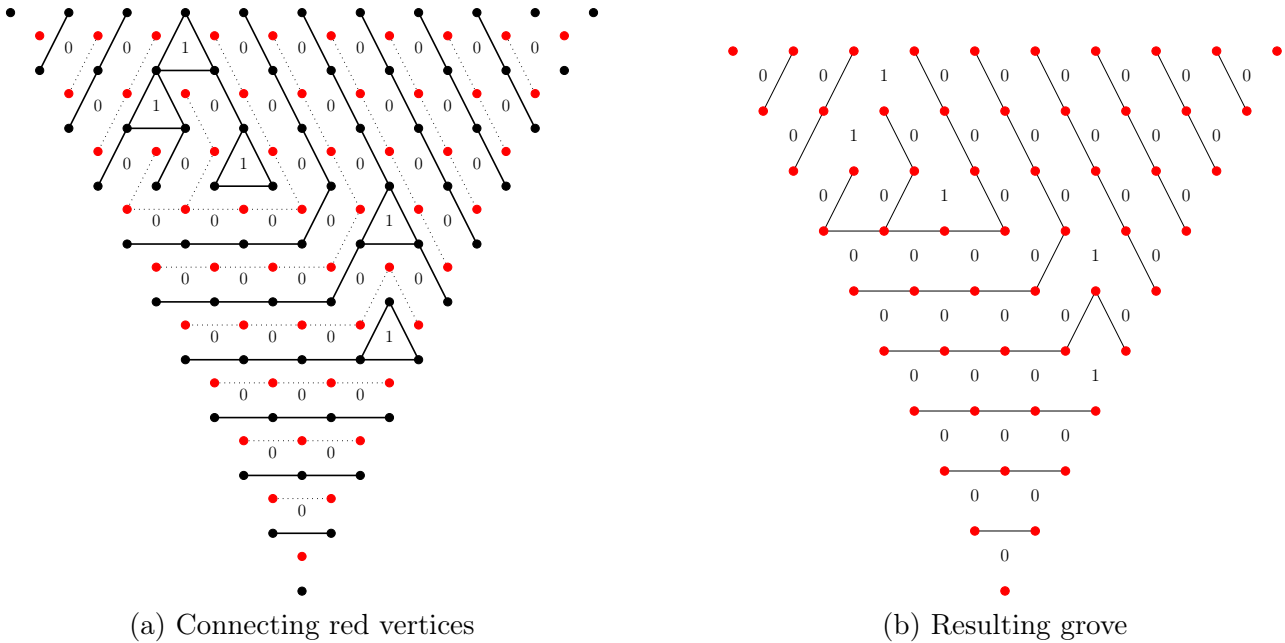
\begin{figure}[h!]
		
     \centering
        \begin{subfigure}[b]{8cm}
            \centering
    
    \resizebox{\textwidth}{!}{
    \begin{tikzpicture}
        \filldraw[black] (0,0) circle (2pt);
        \filldraw[black] (1,0) circle (2pt);
        \filldraw[black] (2,0) circle (2pt);
        \filldraw[black] (3,0) circle (2pt);
        \filldraw[black] (4,0) circle (2pt);
        \filldraw[black] (5,0) circle (2pt);
        \filldraw[black] (6,0) circle (2pt);
        \filldraw[black] (7,0) circle (2pt);
        \filldraw[black] (8,0) circle (2pt);
        \filldraw[black] (9,0) circle (2pt);
        \filldraw[black] (10,0) circle (2pt);
        \filldraw[black] (0.5,-1) circle (2pt);
        \filldraw[black] (1.5,-1) circle (2pt);
        \filldraw[black] (2.5,-1) circle (2pt);
        \filldraw[black] (3.5,-1) circle (2pt);
        \filldraw[black] (4.5,-1) circle (2pt);
        \filldraw[black] (5.5,-1) circle (2pt);
        \filldraw[black] (6.5,-1) circle (2pt);
        \filldraw[black] (7.5,-1) circle (2pt);
        \filldraw[black] (8.5,-1) circle (2pt);
        \filldraw[black] (9.5,-1) circle (2pt);
        \filldraw[black] (1,-2) circle (2pt);
        \filldraw[black] (2,-2) circle (2pt);
        \filldraw[black] (3,-2) circle (2pt);
        \filldraw[black] (4,-2) circle (2pt);
        \filldraw[black] (5,-2) circle (2pt);
        \filldraw[black] (6,-2) circle (2pt);
        \filldraw[black] (7,-2) circle (2pt);
        \filldraw[black] (8,-2) circle (2pt);
        \filldraw[black] (9,-2) circle (2pt);
        \filldraw[black] (1.5,-3) circle (2pt);
        \filldraw[black] (2.5,-3) circle (2pt);
        \filldraw[black] (3.5,-3) circle (2pt);
        \filldraw[black] (4.5,-3) circle (2pt);
        \filldraw[black] (5.5,-3) circle (2pt);
        \filldraw[black] (6.5,-3) circle (2pt);
        \filldraw[black] (7.5,-3) circle (2pt);
        \filldraw[black] (8.5,-3) circle (2pt);
        \filldraw[black] (2,-4) circle (2pt);
        \filldraw[black] (3,-4) circle (2pt);
        \filldraw[black] (4,-4) circle (2pt);
        \filldraw[black] (5,-4) circle (2pt);
        \filldraw[black] (6,-4) circle (2pt);
        \filldraw[black] (7,-4) circle (2pt);
        \filldraw[black] (8,-4) circle (2pt);
        \filldraw[black] (2.5,-5) circle (2pt);
        \filldraw[black] (3.5,-5) circle (2pt);
        \filldraw[black] (4.5,-5) circle (2pt);
        \filldraw[black] (5.5,-5) circle (2pt);
        \filldraw[black] (6.5,-5) circle (2pt);
        \filldraw[black] (7.5,-5) circle (2pt);
        \filldraw[black] (3,-6) circle (2pt);
        \filldraw[black] (4,-6) circle (2pt);
        \filldraw[black] (5,-6) circle (2pt);
        \filldraw[black] (6,-6) circle (2pt);
        \filldraw[black] (7,-6) circle (2pt);
        \filldraw[black] (3.5,-7) circle (2pt);
        \filldraw[black] (4.5,-7) circle (2pt);
        \filldraw[black] (5.5,-7) circle (2pt);
        \filldraw[black] (6.5,-7) circle (2pt);
        \filldraw[black] (4,-8) circle (2pt);
        \filldraw[black] (5,-8) circle (2pt);
        \filldraw[black] (6,-8) circle (2pt);
        \filldraw[black] (4.5,-9) circle (2pt);
        \filldraw[black] (5.5,-9) circle (2pt);
        \filldraw[black] (5,-10) circle (2pt);
        
        \draw[black, thick] (1,0) -- (0.5,-1);
        \draw[black, thick] (2,0) -- (1,-2);
        \draw[black, thick] (3,0) -- (1.5,-3);
        \draw[black, thick] (3,0) -- (3.5,-1);
        \draw[black, thick] (2.5,-1) -- (3.5,-1);
        \draw[black, thick] (2.5,-1) -- (3,-2);
        \draw[black, thick] (2,-2) -- (3,-2);
        \draw[black, thick] (4,-2) -- (3.5,-3);
        \draw[black, thick] (4,-2) -- (4.5,-3);
        \draw[black, thick] (4.5,-3) -- (3.5,-3);
        \draw[black, thick] (4,-2) -- (3.5,-1);
        \draw[black, thick] (3,-2) -- (2.5,-3);
        \draw[black, thick] (4,0) -- (5.5,-3);
        \draw[black, thick] (5,-4) -- (5.5,-3);
        \draw[black, thick] (5,-4) -- (2,-4);
        \draw[black, thick] (5,0) -- (7.5,-5);
        \draw[black, thick] (6,0) -- (8,-4);
        \draw[black, thick] (7,0) -- (8.5,-3);
        \draw[black, thick] (8,0) -- (9,-2);
        \draw[black, thick] (5.5,-9) -- (4.5,-9);
        \draw[black, thick] (6,-8) -- (4,-8);
        \draw[black, thick] (6.5,-7) -- (3.5,-7);
        \draw[black, thick] (7,-6) -- (3,-6);
        \draw[black, thick] (2.5,-5) -- (5.5,-5);
        \draw[black, thick] (6.5,-3) -- (5.5,-5);
        \draw[black, thick] (6,-4) -- (7,-4);
        \draw[black, thick] (6,-6) -- (6.5,-5);
        \draw[black, thick] (7,-6) -- (6.5,-5);
        
        \draw (1,-0.6) node[anchor=center, scale = 0.7] {0};
        \draw (2,-0.6) node[anchor=center, scale = 0.7] {0};
        \draw (3,-0.6) node[anchor=center, scale = 0.7] {1};
        \draw (4,-0.6) node[anchor=center, scale = 0.7] {0};
        \draw (5,-0.6) node[anchor=center, scale = 0.7] {0};
        \draw (6,-0.6) node[anchor=center, scale = 0.7] {0};
        \draw (7,-0.6) node[anchor=center, scale = 0.7] {0};
        \draw (8,-0.6) node[anchor=center, scale = 0.7] {0};
        \draw (9,-0.6) node[anchor=center, scale = 0.7] {0};
        \draw (1.5,-1.6) node[anchor=center, scale = 0.7] {0};
        \draw (2.5,-1.6) node[anchor=center, scale = 0.7] {1};
        \draw (3.5,-1.6) node[anchor=center, scale = 0.7] {0};
        \draw (4.5,-1.6) node[anchor=center, scale = 0.7] {0};
        \draw (5.5,-1.6) node[anchor=center, scale = 0.7] {0};
        \draw (6.5,-1.6) node[anchor=center, scale = 0.7] {0};
        \draw (7.5,-1.6) node[anchor=center, scale = 0.7] {0};
        \draw (8.5,-1.6) node[anchor=center, scale = 0.7] {0};
        \draw (2,-2.6) node[anchor=center, scale = 0.7] {0};
        \draw (3,-2.6) node[anchor=center, scale = 0.7] {0};
        \draw (4,-2.6) node[anchor=center, scale = 0.7] {1};
        \draw (5,-2.6) node[anchor=center, scale = 0.7] {0};
        \draw (6,-2.6) node[anchor=center, scale = 0.7] {0};
        \draw (7,-2.6) node[anchor=center, scale = 0.7] {0};
        \draw (8,-2.6) node[anchor=center, scale = 0.7] {0};
        \draw (2.5,-3.6) node[anchor=center, scale = 0.7] {0};
        \draw (3.5,-3.6) node[anchor=center, scale = 0.7] {0};
        \draw (4.5,-3.6) node[anchor=center, scale = 0.7] {0};
        \draw (5.5,-3.6) node[anchor=center, scale = 0.7] {0};
        \draw (6.5,-3.6) node[anchor=center, scale = 0.7] {1};
        \draw (7.5,-3.6) node[anchor=center, scale = 0.7] {0};
        \draw (3,-4.6) node[anchor=center, scale = 0.7] {0};
        \draw (4,-4.6) node[anchor=center, scale = 0.7] {0};
        \draw (5,-4.6) node[anchor=center, scale = 0.7] {0};
        \draw (6,-4.6) node[anchor=center, scale = 0.7] {0};
        \draw (7,-4.6) node[anchor=center, scale = 0.7] {0};
        \draw (3.5,-5.6) node[anchor=center, scale = 0.7] {0};
        \draw (4.5,-5.6) node[anchor=center, scale = 0.7] {0};
        \draw (5.5,-5.6) node[anchor=center, scale = 0.7] {0};
        \draw (6.5,-5.6) node[anchor=center, scale = 0.7] {1};
        \draw (4,-6.6) node[anchor=center, scale = 0.7] {0};
        \draw (5,-6.6) node[anchor=center, scale = 0.7] {0};
        \draw (6,-6.6) node[anchor=center, scale = 0.7] {0};
        \draw (4.5,-7.6) node[anchor=center, scale = 0.7] {0};
        \draw (5.5,-7.6) node[anchor=center, scale = 0.7] {0};
        \draw (5,-8.6) node[anchor=center, scale = 0.7] {0};
        
        \draw[black, dotted] (1.5,-0.4) -- (1,-1.4);
        \draw[black, dotted] (2.5,-0.4) -- (1.5,-2.4);
        \draw[black, dotted] (3.5,-0.4) -- (5,-3.4);
        \draw[black, dotted] (4.5,-0.4) -- (6,-3.4);
        \draw[black, dotted] (5.5,-0.4) -- (7.5,-4.4);
        \draw[black, dotted] (6.5,-0.4) -- (8,-3.4);
        \draw[black, dotted] (7.5,-0.4) -- (8.5,-2.4);
        \draw[black, dotted] (8.5,-0.4) -- (9,-1.4);
        \draw[black, dotted] (3.5,-2.4) -- (3,-1.4);
        \draw[black, dotted] (2.5,-2.4) -- (2,-3.4);
        \draw[black, dotted] (5,-3.4) -- (2,-3.4);
        \draw[black, dotted] (2.5,-4.4) -- (5.5,-4.4);
        \draw[black, dotted] (7,-5.4) -- (6.5,-4.4);
        \draw[black, dotted] (6,-5.4) -- (6.5,-4.4);
        \draw[black, dotted] (6,-5.4) -- (3,-5.4);
        \draw[black, dotted] (3.5,-6.4) -- (6.5,-6.4);
        \draw[black, dotted] (4,-7.4) -- (6,-7.4);
        \draw[black, dotted] (4.5,-8.4) -- (5.5,-8.4);
        \draw[black, dotted] (6,-3.4) -- (5.5,-4.4);
        \draw[black, dotted] (3,-3.4) -- (3.5,-2.4);
        
        \filldraw[red] (0.5,-0.4) circle (2pt);
        \filldraw[red] (1.5,-0.4) circle (2pt);
        \filldraw[red] (2.5,-0.4) circle (2pt);
        \filldraw[red] (3.5,-0.4) circle (2pt);
        \filldraw[red] (4.5,-0.4) circle (2pt);
        \filldraw[red] (5.5,-0.4) circle (2pt);
        \filldraw[red] (6.5,-0.4) circle (2pt);
        \filldraw[red] (7.5,-0.4) circle (2pt);
        \filldraw[red] (8.5,-0.4) circle (2pt);
        \filldraw[red] (9.5,-0.4) circle (2pt);
        \filldraw[red] (1,-1.4) circle (2pt);
        \filldraw[red] (2,-1.4) circle (2pt);
        \filldraw[red] (3,-1.4) circle (2pt);
        \filldraw[red] (4,-1.4) circle (2pt);
        \filldraw[red] (5,-1.4) circle (2pt);
        \filldraw[red] (6,-1.4) circle (2pt);
        \filldraw[red] (7,-1.4) circle (2pt);
        \filldraw[red] (8,-1.4) circle (2pt);
        \filldraw[red] (9,-1.4) circle (2pt);
        \filldraw[red] (1.5,-2.4) circle (2pt);
        \filldraw[red] (2.5,-2.4) circle (2pt);
        \filldraw[red] (3.5,-2.4) circle (2pt);
        \filldraw[red] (4.5,-2.4) circle (2pt);
        \filldraw[red] (5.5,-2.4) circle (2pt);
        \filldraw[red] (6.5,-2.4) circle (2pt);
        \filldraw[red] (7.5,-2.4) circle (2pt);
        \filldraw[red] (8.5,-2.4) circle (2pt);
        \filldraw[red] (2,-3.4) circle (2pt);
        \filldraw[red] (3,-3.4) circle (2pt);
        \filldraw[red] (4,-3.4) circle (2pt);
        \filldraw[red] (5,-3.4) circle (2pt);
        \filldraw[red] (6,-3.4) circle (2pt);
        \filldraw[red] (7,-3.4) circle (2pt);
        \filldraw[red] (8,-3.4) circle (2pt);
        \filldraw[red] (2.5,-4.4) circle (2pt);
        \filldraw[red] (3.5,-4.4) circle (2pt);
        \filldraw[red] (4.5,-4.4) circle (2pt);
        \filldraw[red] (5.5,-4.4) circle (2pt);
        \filldraw[red] (6.5,-4.4) circle (2pt);
        \filldraw[red] (7.5,-4.4) circle (2pt);
        \filldraw[red] (3,-5.4) circle (2pt);
        \filldraw[red] (4,-5.4) circle (2pt);
        \filldraw[red] (5,-5.4) circle (2pt);
        \filldraw[red] (6,-5.4) circle (2pt);
        \filldraw[red] (7,-5.4) circle (2pt);
        \filldraw[red] (3.5,-6.4) circle (2pt);
        \filldraw[red] (4.5,-6.4) circle (2pt);
        \filldraw[red] (5.5,-6.4) circle (2pt);
        \filldraw[red] (6.5,-6.4) circle (2pt);
        \filldraw[red] (4,-7.4) circle (2pt);
        \filldraw[red] (5,-7.4) circle (2pt);
        \filldraw[red] (6,-7.4) circle (2pt);
        \filldraw[red] (4.5,-8.4) circle (2pt);
        \filldraw[red] (5.5,-8.4) circle (2pt);
        \filldraw[red] (5,-9.4) circle (2pt);
        
    \end{tikzpicture}
    }
            \caption{Connecting red vertices}
        \end{subfigure}
     \hfill
        \begin{subfigure}[b]{7.5cm}
            \centering
    
    \resizebox{\textwidth}{!}{
    \begin{tikzpicture}
    
        \draw (1,-0.6) node[anchor=center, scale = 0.7] {0};
        \draw (2,-0.6) node[anchor=center, scale = 0.7] {0};
        \draw (3,-0.6) node[anchor=center, scale = 0.7] {1};
        \draw (4,-0.6) node[anchor=center, scale = 0.7] {0};
        \draw (5,-0.6) node[anchor=center, scale = 0.7] {0};
        \draw (6,-0.6) node[anchor=center, scale = 0.7] {0};
        \draw (7,-0.6) node[anchor=center, scale = 0.7] {0};
        \draw (8,-0.6) node[anchor=center, scale = 0.7] {0};
        \draw (9,-0.6) node[anchor=center, scale = 0.7] {0};
        \draw (1.5,-1.6) node[anchor=center, scale = 0.7] {0};
        \draw (2.5,-1.6) node[anchor=center, scale = 0.7] {1};
        \draw (3.5,-1.6) node[anchor=center, scale = 0.7] {0};
        \draw (4.5,-1.6) node[anchor=center, scale = 0.7] {0};
        \draw (5.5,-1.6) node[anchor=center, scale = 0.7] {0};
        \draw (6.5,-1.6) node[anchor=center, scale = 0.7] {0};
        \draw (7.5,-1.6) node[anchor=center, scale = 0.7] {0};
        \draw (8.5,-1.6) node[anchor=center, scale = 0.7] {0};
        \draw (2,-2.6) node[anchor=center, scale = 0.7] {0};
        \draw (3,-2.6) node[anchor=center, scale = 0.7] {0};
        \draw (4,-2.6) node[anchor=center, scale = 0.7] {1};
        \draw (5,-2.6) node[anchor=center, scale = 0.7] {0};
        \draw (6,-2.6) node[anchor=center, scale = 0.7] {0};
        \draw (7,-2.6) node[anchor=center, scale = 0.7] {0};
        \draw (8,-2.6) node[anchor=center, scale = 0.7] {0};
        \draw (2.5,-3.6) node[anchor=center, scale = 0.7] {0};
        \draw (3.5,-3.6) node[anchor=center, scale = 0.7] {0};
        \draw (4.5,-3.6) node[anchor=center, scale = 0.7] {0};
        \draw (5.5,-3.6) node[anchor=center, scale = 0.7] {0};
        \draw (6.5,-3.6) node[anchor=center, scale = 0.7] {1};
        \draw (7.5,-3.6) node[anchor=center, scale = 0.7] {0};
        \draw (3,-4.6) node[anchor=center, scale = 0.7] {0};
        \draw (4,-4.6) node[anchor=center, scale = 0.7] {0};
        \draw (5,-4.6) node[anchor=center, scale = 0.7] {0};
        \draw (6,-4.6) node[anchor=center, scale = 0.7] {0};
        \draw (7,-4.6) node[anchor=center, scale = 0.7] {0};
        \draw (3.5,-5.6) node[anchor=center, scale = 0.7] {0};
        \draw (4.5,-5.6) node[anchor=center, scale = 0.7] {0};
        \draw (5.5,-5.6) node[anchor=center, scale = 0.7] {0};
        \draw (6.5,-5.6) node[anchor=center, scale = 0.7] {1};
        \draw (4,-6.6) node[anchor=center, scale = 0.7] {0};
        \draw (5,-6.6) node[anchor=center, scale = 0.7] {0};
        \draw (6,-6.6) node[anchor=center, scale = 0.7] {0};
        \draw (4.5,-7.6) node[anchor=center, scale = 0.7] {0};
        \draw (5.5,-7.6) node[anchor=center, scale = 0.7] {0};
        \draw (5,-8.6) node[anchor=center, scale = 0.7] {0};
        
        \draw[black] (1.5,-0.2) -- (1,-1.2);
        \draw[black] (2.5,-0.2) -- (1.5,-2.2);
        \draw[black] (3.5,-0.2) -- (5,-3.2);
        \draw[black] (4.5,-0.2) -- (6,-3.2);
        \draw[black] (5.5,-0.2) -- (7.5,-4.2);
        \draw[black] (6.5,-0.2) -- (8,-3.2);
        \draw[black] (7.5,-0.2) -- (8.5,-2.2);
        \draw[black] (8.5,-0.2) -- (9,-1.2);
        \draw[black] (3.5,-2.2) -- (3,-1.2);
        \draw[black] (2.5,-2.2) -- (2,-3.2);
        \draw[black] (5,-3.2) -- (2,-3.2);
        \draw[black] (2.5,-4.2) -- (5.5,-4.2);
        \draw[black] (7,-5.2) -- (6.5,-4.2);
        \draw[black] (6,-5.2) -- (6.5,-4.2);
        \draw[black] (6,-5.2) -- (3,-5.2);
        \draw[black] (3.5,-6.2) -- (6.5,-6.2);
        \draw[black] (4,-7.2) -- (6,-7.2);
        \draw[black] (4.5,-8.2) -- (5.5,-8.2);
        \draw[black] (6,-3.2) -- (5.5,-4.2);
        \draw[black] (3,-3.2) -- (3.5,-2.2);
        
        \filldraw[red] (0.5,-0.2) circle (2pt);
        \filldraw[red] (1.5,-0.2) circle (2pt);
        \filldraw[red] (2.5,-0.2) circle (2pt);
        \filldraw[red] (3.5,-0.2) circle (2pt);
        \filldraw[red] (4.5,-0.2) circle (2pt);
        \filldraw[red] (5.5,-0.2) circle (2pt);
        \filldraw[red] (6.5,-0.2) circle (2pt);
        \filldraw[red] (7.5,-0.2) circle (2pt);
        \filldraw[red] (8.5,-0.2) circle (2pt);
        \filldraw[red] (9.5,-0.2) circle (2pt);
        \filldraw[red] (1,-1.2) circle (2pt);
        \filldraw[red] (2,-1.2) circle (2pt);
        \filldraw[red] (3,-1.2) circle (2pt);
        \filldraw[red] (4,-1.2) circle (2pt);
        \filldraw[red] (5,-1.2) circle (2pt);
        \filldraw[red] (6,-1.2) circle (2pt);
        \filldraw[red] (7,-1.2) circle (2pt);
        \filldraw[red] (8,-1.2) circle (2pt);
        \filldraw[red] (9,-1.2) circle (2pt);
        \filldraw[red] (1.5,-2.2) circle (2pt);
        \filldraw[red] (2.5,-2.2) circle (2pt);
        \filldraw[red] (3.5,-2.2) circle (2pt);
        \filldraw[red] (4.5,-2.2) circle (2pt);
        \filldraw[red] (5.5,-2.2) circle (2pt);
        \filldraw[red] (6.5,-2.2) circle (2pt);
        \filldraw[red] (7.5,-2.2) circle (2pt);
        \filldraw[red] (8.5,-2.2) circle (2pt);
        \filldraw[red] (2,-3.2) circle (2pt);
        \filldraw[red] (3,-3.2) circle (2pt);
        \filldraw[red] (4,-3.2) circle (2pt);
        \filldraw[red] (5,-3.2) circle (2pt);
        \filldraw[red] (6,-3.2) circle (2pt);
        \filldraw[red] (7,-3.2) circle (2pt);
        \filldraw[red] (8,-3.2) circle (2pt);
        \filldraw[red] (2.5,-4.2) circle (2pt);
        \filldraw[red] (3.5,-4.2) circle (2pt);
        \filldraw[red] (4.5,-4.2) circle (2pt);
        \filldraw[red] (5.5,-4.2) circle (2pt);
        \filldraw[red] (6.5,-4.2) circle (2pt);
        \filldraw[red] (7.5,-4.2) circle (2pt);
        \filldraw[red] (3,-5.2) circle (2pt);
        \filldraw[red] (4,-5.2) circle (2pt);
        \filldraw[red] (5,-5.2) circle (2pt);
        \filldraw[red] (6,-5.2) circle (2pt);
        \filldraw[red] (7,-5.2) circle (2pt);
        \filldraw[red] (3.5,-6.2) circle (2pt);
        \filldraw[red] (4.5,-6.2) circle (2pt);
        \filldraw[red] (5.5,-6.2) circle (2pt);
        \filldraw[red] (6.5,-6.2) circle (2pt);
        \filldraw[red] (4,-7.2) circle (2pt);
        \filldraw[red] (5,-7.2) circle (2pt);
        \filldraw[red] (6,-7.2) circle (2pt);
        \filldraw[red] (4.5,-8.2) circle (2pt);
        \filldraw[red] (5.5,-8.2) circle (2pt);
        \filldraw[red] (5,-9.2) circle (2pt);
        
    \end{tikzpicture}
    }
            \caption{Resulting grove}
        \end{subfigure}
        
        \caption{Final grove}
        \label{Figure 6}
        
    \end{figure}
	
	\justify
	\textbf{Remark:} An interesting point to note from our proof is that in every permutation triangle of size $n$, there is a permutation triangle of size $n-1$. This is also true for alternating sign triangles. The proof for this remark will not be discussed here since it use the same argument as the proof above. Though this remark will not be discussed further in this paper, this may lead to some potential recursive relations among alternating sign triangles.
	
	\section{Alternating sign triangles}
	
	It is still an open problem to give a simple direct characterization of alternating sign triangles, without going through groves. Nevertheless, in this section, we will discuss some properties of alternating sign triangles. Note that due to symmetry, all properties discussed below are also true for the left-columns and right-columns.
	
	\begin{property}
	    The sum of the entries in any isosceles trapezoid of height $i$ whose top row lies on the top row of the alternating sign triangle (in case the bottom row contains only $1$ entry, this become a downward triangle) is at most $i-v$ where $v$ is the number of $-1$ on the boundary of the trapezoid.
	\end{property}
	
	\begin{proof}
	  The proof of this property use the same argument as property $3$ of permutation triangles. Consider the corresponding grove. Assume that there are $k$ vertices on the first row, then there are $\frac{(k+k-i)(i+1)}{2}$ vertices in the trapezoid, $2k+i-2$ of which lied on the boundary. Each top vertex except the two left-most and right-most has to be connected with at least one other boundary vertex, and each $-1$ on the boundary connect two boundary vertices. Therefore, the boundary vertices are divided into at most $2k+i-2-(k-2)-v=k+i-v$ components. Therefore, there are at least $\frac{(k+k-i)(i+1)}{2}-(k+i-v)=ki-\frac{i^2+i}{2}-i+v$ edges. However, there are $\frac{(k-1+k-i)i}{2}=ki-\frac{i^2+i}{2}$ triangles. Hence, the sum of all entries is at most $i-v$.
	\end{proof}
    
    \begin{figure}[h!]
        \centering
        \begin{tikzpicture}
        
        \draw[blue, dashed] (1,0) -- (7,0);
        \draw[blue, dashed] (5.5,-3) -- (7,0);
        \draw[blue, dashed] (5.5,-3) -- (2.5,-3);
        \draw[blue, dashed] (1,0) -- (2.5,-3);
        \draw[fill=cyan, nearly transparent]  (1,0) -- (7,0) -- (5.5,-3) -- (2.5,-3) -- cycle;
        
        \filldraw[black] (0,0) circle (2pt);
        \filldraw[black] (1,0) circle (2pt);
        \filldraw[black] (2,0) circle (2pt);
        \filldraw[black] (3,0) circle (2pt);
        \filldraw[black] (4,0) circle (2pt);
        \filldraw[black] (5,0) circle (2pt);
        \filldraw[black] (6,0) circle (2pt);
        \filldraw[black] (7,0) circle (2pt);
        \filldraw[black] (8,0) circle (2pt);
        \filldraw[black] (9,0) circle (2pt);
        \filldraw[black] (0.5,-1) circle (2pt);
        \filldraw[black] (1.5,-1) circle (2pt);
        \filldraw[black] (2.5,-1) circle (2pt);
        \filldraw[black] (3.5,-1) circle (2pt);
        \filldraw[black] (4.5,-1) circle (2pt);
        \filldraw[black] (5.5,-1) circle (2pt);
        \filldraw[black] (6.5,-1) circle (2pt);
        \filldraw[black] (7.5,-1) circle (2pt);
        \filldraw[black] (8.5,-1) circle (2pt);
        \filldraw[black] (1,-2) circle (2pt);
        \filldraw[black] (2,-2) circle (2pt);
        \filldraw[black] (3,-2) circle (2pt);
        \filldraw[black] (4,-2) circle (2pt);
        \filldraw[black] (5,-2) circle (2pt);
        \filldraw[black] (6,-2) circle (2pt);
        \filldraw[black] (7,-2) circle (2pt);
        \filldraw[black] (8,-2) circle (2pt);
        \filldraw[black] (1.5,-3) circle (2pt);
        \filldraw[black] (2.5,-3) circle (2pt);
        \filldraw[black] (3.5,-3) circle (2pt);
        \filldraw[black] (4.5,-3) circle (2pt);
        \filldraw[black] (5.5,-3) circle (2pt);
        \filldraw[black] (6.5,-3) circle (2pt);
        \filldraw[black] (7.5,-3) circle (2pt);
        
        \draw[black, thick] (2,0) -- (1.5,-1);
        \draw[black, thick] (2.5,-1) -- (1.5,-1);
        \draw[black, thick] (2.5,-1) -- (2,-2);
        \draw[black, thick] (3,0) -- (3.5,-1);
        \draw[black, thick] (3,-2) -- (3.5,-1);
        \draw[black, thick] (3,-2) -- (3.5,-3);
        \draw[black, thick] (4,0) -- (5.5,-3);
        \draw[black, thick] (4,-2) -- (5,-2);
        \draw[black, thick] (5,0) -- (6,-2);
        \draw[black, thick] (6,0) -- (6.5,-1);
        
        \draw (1.5,-0.4) node[anchor=center, scale = 0.7] {0};
        \draw (2.5,-0.4) node[anchor=center, scale = 0.7] {1};
        \draw (3.5,-0.4) node[anchor=center, scale = 0.7] {0};
        \draw (4.5,-0.4) node[anchor=center, scale = 0.7] {0};
        \draw (5.5,-0.4) node[anchor=center, scale = 0.7] {0};
        \draw (6.5,-0.4) node[anchor=center, scale = 0.7] {0};
        \draw (2,-1.4) node[anchor=center, scale = 0.7] {-1};
        \draw (3,-1.4) node[anchor=center, scale = 0.7] {0};
        \draw (4,-1.4) node[anchor=center, scale = 0.7] {1};
        \draw (5,-1.4) node[anchor=center, scale = 0.7] {0};
        \draw (6,-1.4) node[anchor=center, scale = 0.7] {0};
        \draw (2.5,-2.4) node[anchor=center, scale = 0.7] {1};
        \draw (3.5,-2.4) node[anchor=center, scale = 0.7] {0};
        \draw (4.5,-2.4) node[anchor=center, scale = 0.7] {0};
        \draw (5.5,-2.4) node[anchor=center, scale = 0.7] {0};
        \end{tikzpicture}
        \caption{Example with $i=3,v=1$}
        \label{Figure 7}
    \end{figure}
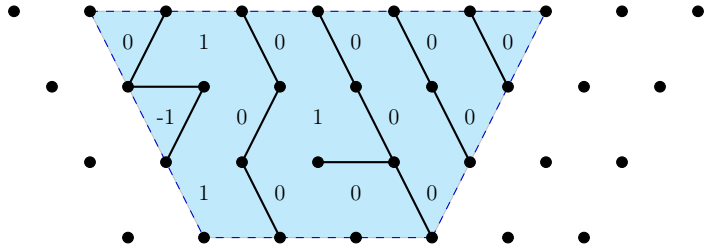
	
	\justify
	\textbf{Remark:} This property is a generalization of property $3$ of permutation triangles.
	
	\begin{property}
	    The sum of the entries in an upward triangle of size $i$ is at most $i$.
	\end{property}
	
	    This property can be proven using the same argument as property $6$ of permutation triangles.
	
	\begin{property}
	    The $i$th row has at most $i$ $1$-s and $i-1$ $-1$-s.
	\end{property}
	
	\begin{proof}
	    Consider a sub-graph whose vertices are those in the first $i+1$ rows of the grove and edges are those connecting such vertices, excluding edges connecting the vertices in row $i+1$. The connectivity conditions state that the vertices $(-n+2i,0),(-n+2i+2,0),...,(n-2i,0)$ have to be connected with some vertices in row $i+1$ and below. Therefore, at least $n+1-2i$ vertices in row $i+1$ have to be connected with a vertex in row $1$, which means there are at least $n+1-2i$ $0$-s in row $i$. Hence, there are at most $i$ $1$-s in row $i$.
	    
	    On the other hand, at least $n+1-2i+2$ vertices in row $i$ have to be connected with a vertex in row $1$. This means that the vertices in row $i$ are divided into at least $n+1-2i+2$ components. Since each $-1$ in row $i$ reduces the number of components by $1$, there are at most $i-1$ $-1$-s in row $i$.
	\end{proof}
	
	\begin{property}
	    In the union of any set of downward triangles whose first row contains entries in the first row of the alternating sign triangles, the sum of the entries is non-negative. 
	\end{property}
	
	\begin{proof}
	    Let the number of vertices in the union be $k$, and $A$ be the set of the vertices in the union that are also in the first row. The vertices in $A$ needs to be in different components; hence, in the union, there are at least $\lvert A\rvert$ components. Therefore, there are at most $k-\lvert A\rvert$ edges in the union. On the other hand, for every vertex in the union that are not in $A$, the downward triangle above it is also in the union; conversely, for any downward triangle in the union, its bottom vertex is also in the union and is also not in $A$. Therefore, the number of downward triangles is exactly $k-\lvert A\rvert$. Hence, the sum of the entries is non-negative.
	\end{proof}
    
    \begin{figure}[h!]
        \centering
        \begin{tikzpicture}

        \filldraw[black] (0,0) circle (2pt);
        \filldraw[black] (1,0) circle (2pt);
        \filldraw[black] (2,0) circle (2pt);
        \filldraw[black] (3,0) circle (2pt);
        \filldraw[black] (4,0) circle (2pt);
        \filldraw[black] (5,0) circle (2pt);
        \filldraw[black] (6,0) circle (2pt);
        \filldraw[black] (7,0) circle (2pt);
        \filldraw[black] (8,0) circle (2pt);
        \filldraw[black] (0.5,-1) circle (2pt);
        \filldraw[black] (1.5,-1) circle (2pt);
        \filldraw[black] (2.5,-1) circle (2pt);
        \filldraw[black] (3.5,-1) circle (2pt);
        \filldraw[black] (4.5,-1) circle (2pt);
        \filldraw[black] (5.5,-1) circle (2pt);
        \filldraw[black] (6.5,-1) circle (2pt);
        \filldraw[black] (7.5,-1) circle (2pt);
        \filldraw[black] (1,-2) circle (2pt);
        \filldraw[black] (2,-2) circle (2pt);
        \filldraw[black] (3,-2) circle (2pt);
        \filldraw[black] (4,-2) circle (2pt);
        \filldraw[black] (5,-2) circle (2pt);
        \filldraw[black] (6,-2) circle (2pt);
        \filldraw[black] (7,-2) circle (2pt);
        \filldraw[black] (1.5,-3) circle (2pt);
        \filldraw[black] (2.5,-3) circle (2pt);
        \filldraw[black] (3.5,-3) circle (2pt);
        \filldraw[black] (4.5,-3) circle (2pt);
        \filldraw[black] (5.5,-3) circle (2pt);
        \filldraw[black] (6.5,-3) circle (2pt);
        \filldraw[black] (2,-4) circle (2pt);
        \filldraw[black] (3,-4) circle (2pt);
        \filldraw[black] (4,-4) circle (2pt);
        \filldraw[black] (5,-4) circle (2pt);
        \filldraw[black] (6,-4) circle (2pt);
        \filldraw[black] (2.5,-5) circle (2pt);
        \filldraw[black] (3.5,-5) circle (2pt);
        \filldraw[black] (4.5,-5) circle (2pt);
        \filldraw[black] (5.5,-5) circle (2pt);
        \filldraw[black] (3,-6) circle (2pt);
        \filldraw[black] (4,-6) circle (2pt);
        \filldraw[black] (5,-6) circle (2pt);
        \filldraw[black] (3.5,-7) circle (2pt);
        \filldraw[black] (4.5,-7) circle (2pt);
        \filldraw[black] (4,-8) circle (2pt);
        
        \draw[black, thick] (1,0) -- (0.5,-1);
        \draw[black, thick] (2,0) -- (1,-2);
        \draw[black, thick] (3,0) -- (1.5,-3);
        \draw[black, thick] (2.5,-3) -- (2,-2);
        \draw[black, thick] (2.5,-3) -- (3,-2);
        \draw[black, thick] (4,0) -- (5.5,-3);
        \draw[black, thick] (5,0) -- (6.5,-3);
        \draw[black, thick] (6,0) -- (7,-2);
        \draw[black, thick] (7,0) -- (7.5,-1);
        \draw[black, thick] (4.5,-3) -- (5.5,-3);
        \draw[black, thick] (3.5,-1) -- (5,-4);
        \draw[black, thick] (6,-4) -- (5,-4);
        \draw[black, thick] (2,-4) -- (3,-4);
        \draw[black, thick] (4,-2) -- (3,-4);
        \draw[black, thick] (2.5,-5) -- (3.5,-5);
        \draw[black, thick] (4,-4) -- (3.5,-5);
        \draw[black, thick] (4,-4) -- (4.5,-5);
        \draw[black, thick] (5.5,-5) -- (4.5,-5);
        \draw[black, thick] (3,-6) -- (5,-6);
        \draw[black, thick] (3.5,-7) -- (4.5,-7);
        
        \draw (0.5,-0.4) node[anchor=center, scale = 0.7] {0};
        \draw (1.5,-0.4) node[anchor=center, scale = 0.7] {0};
        \draw (2.5,-0.4) node[anchor=center, scale = 0.7] {0};
        \draw (3.5,-0.4) node[anchor=center, scale = 0.7] {1};
        \draw (4.5,-0.4) node[anchor=center, scale = 0.7] {0};
        \draw (5.5,-0.4) node[anchor=center, scale = 0.7] {0};
        \draw (6.5,-0.4) node[anchor=center, scale = 0.7] {0};
        \draw (7.5,-0.4) node[anchor=center, scale = 0.7] {0};
        \draw (1,-1.4) node[anchor=center, scale = 0.7] {0};
        \draw (2,-1.4) node[anchor=center, scale = 0.7] {0};
        \draw (3,-1.4) node[anchor=center, scale = 0.7] {1};
        \draw (4,-1.4) node[anchor=center, scale = 0.7] {0};
        \draw (5,-1.4) node[anchor=center, scale = 0.7] {0};
        \draw (6,-1.4) node[anchor=center, scale = 0.7] {0};
        \draw (7,-1.4) node[anchor=center, scale = 0.7] {0};
        \draw (1.5,-2.4) node[anchor=center, scale = 0.7] {0};
        \draw (2.5,-2.4) node[anchor=center, scale = 0.7] {-1};
        \draw (3.5,-2.4) node[anchor=center, scale = 0.7] {0};
        \draw (4.5,-2.4) node[anchor=center, scale = 0.7] {0};
        \draw (5.5,-2.4) node[anchor=center, scale = 0.7] {0};
        \draw (6.5,-2.4) node[anchor=center, scale = 0.7] {0};
        \draw (2,-3.4) node[anchor=center, scale = 0.7] {1};
        \draw (3,-3.4) node[anchor=center, scale = 0.7] {0};
        \draw (4,-3.4) node[anchor=center, scale = 0.7] {1};
        \draw (5,-3.4) node[anchor=center, scale = 0.7] {-1};
        \draw (6,-3.4) node[anchor=center, scale = 0.7] {1};
        \draw (2.5,-4.4) node[anchor=center, scale = 0.7] {0};
        \draw (3.5,-4.4) node[anchor=center, scale = 0.7] {0};
        \draw (4.5,-4.4) node[anchor=center, scale = 0.7] {0};
        \draw (5.5,-4.4) node[anchor=center, scale = 0.7] {0};
        \draw (3,-5.4) node[anchor=center, scale = 0.7] {0};
        \draw (4,-5.4) node[anchor=center, scale = 0.7] {1};
        \draw (5,-5.4) node[anchor=center, scale = 0.7] {0};
        \draw (3.5,-6.4) node[anchor=center, scale = 0.7] {0};
        \draw (4.5,-6.4) node[anchor=center, scale = 0.7] {0};
        \draw (4,-7.4) node[anchor=center, scale = 0.7] {0};

        \draw[blue, dashed] (1,0) -- (2.5,-3);
        \draw[blue, dashed] (3.5,-1) -- (2.5,-3);
        \draw[blue, dashed] (3.5,-1) -- (5,-4);
        \draw[blue, dashed] (7,0) -- (5,-4);
        \draw[blue, dashed] (7,0) -- (1,0);
        \draw[fill=cyan, nearly transparent]  (1,0) -- (7,0) -- (5,-4) -- (3.5,-1) -- (2.5,-3) -- cycle;        
        
        \end{tikzpicture}
        \caption{Property 4}
        \label{Figure 8}
    \end{figure}
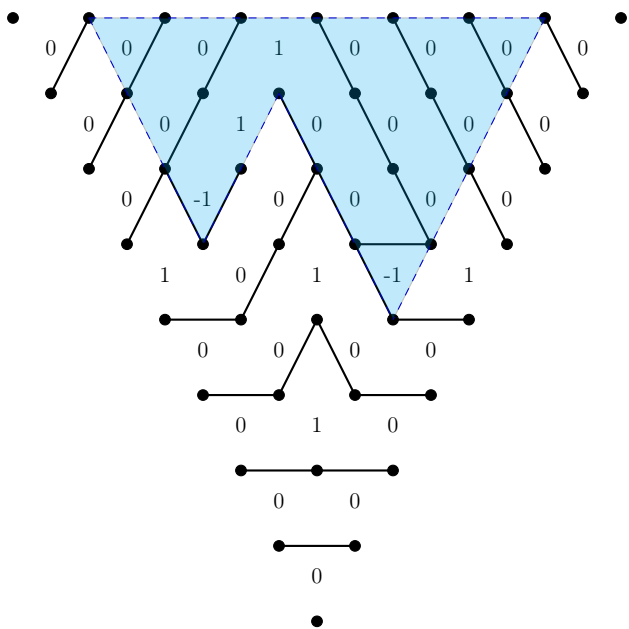
	
	\begin{corollary}
	    For any $-1$-entry, the downward triangle above it has to contain at least $1$ $1$-entry. Similarly, the downward triangles to the left and right of it have to contain at least $1$ $1$-entry.
	\end{corollary}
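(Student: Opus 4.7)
The plan is to read the corollary off directly from Property 4 together with its left- and right-column analogues, which are available via the rotational symmetry of the alternating sign triangle noted at the start of Section 5. Given a $-1$-entry $T$ whose associated downward triangle has apex $(a,b)$, I would build the upward-opening triangular region
\[
S = \{T\} \cup \{\text{downward triangles with apex } (a',b') : b < b' \le 0, \ |a'-a| \le b'-b\}.
\]
Because $S$ is the union of a set of downward triangles whose topmost row lies in the first row of the alternating sign triangle, Property 4 applies and the sum of the entries of $S$ is non-negative.

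Since $T$ contributes $-1$ and every other entry of $S$ is at most $+1$, the entries of $S \setminus \{T\}$ must sum to at least $+1$, and because each entry belongs to $\{-1,0,+1\}$, at least one of them must equal $+1$. The set $S \setminus \{T\}$ is precisely the ``downward triangle above $T$'' referred to in the corollary, so the first assertion follows. For the two remaining assertions I would run the identical argument on the cones opening toward the left-most and right-most columns instead of toward the top row; the rotational symmetry of the alternating sign triangle makes these two cases word-for-word copies of the first after relabelling.

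The only step that really requires checking is the geometric claim that $S$ is a legitimate input for Property 4, that is, that $S$ is a union of downward triangles whose topmost row is contained in the first row of the alternating sign triangle. This is immediate from the apex-cone description, together with the fact -- supplied by Property 3 -- that a $-1$-entry cannot lie in the first row, so $b<0$ and the cone really does reach up to $b'=0$. I do not expect any further obstacle.
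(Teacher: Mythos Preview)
Your argument is correct and is exactly the intended one: the paper states Corollary 1 immediately after Property 4 with no separate proof, so deducing it by applying Property 4 to the cone $S$ with apex at the $-1$-entry (and then invoking the left/right analogues of Property 4 via the threefold symmetry announced at the start of Section 5) is precisely what the author has in mind. One cosmetic point: in the paper's Figure 9 the highlighted ``downward triangle above'' actually includes the $-1$-cell itself, so it is your $S$ rather than $S\setminus\{T\}$; your argument of course proves the (slightly stronger) claim that $S\setminus\{T\}$ already contains a $+1$, so this does not affect anything.
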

    
    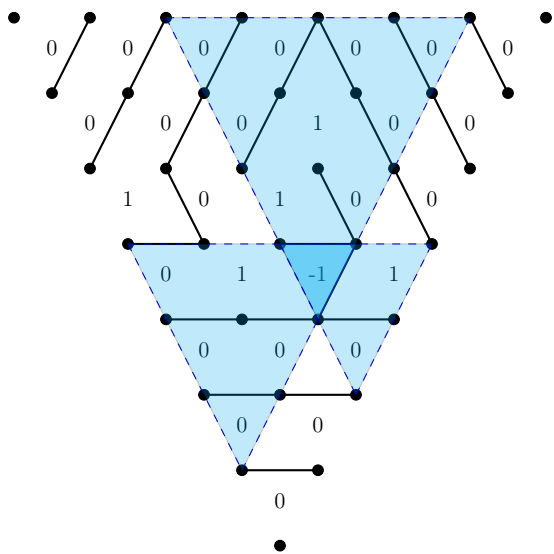
\begin{figure}[h!]
        \centering
        \begin{tikzpicture}

        \filldraw[black] (0,0) circle (2pt);
        \filldraw[black] (1,0) circle (2pt);
        \filldraw[black] (2,0) circle (2pt);
        \filldraw[black] (3,0) circle (2pt);
        \filldraw[black] (4,0) circle (2pt);
        \filldraw[black] (5,0) circle (2pt);
        \filldraw[black] (6,0) circle (2pt);
        \filldraw[black] (7,0) circle (2pt);
        \filldraw[black] (0.5,-1) circle (2pt);
        \filldraw[black] (1.5,-1) circle (2pt);
        \filldraw[black] (2.5,-1) circle (2pt);
        \filldraw[black] (3.5,-1) circle (2pt);
        \filldraw[black] (4.5,-1) circle (2pt);
        \filldraw[black] (5.5,-1) circle (2pt);
        \filldraw[black] (6.5,-1) circle (2pt);
        \filldraw[black] (1,-2) circle (2pt);
        \filldraw[black] (2,-2) circle (2pt);
        \filldraw[black] (3,-2) circle (2pt);
        \filldraw[black] (4,-2) circle (2pt);
        \filldraw[black] (5,-2) circle (2pt);
        \filldraw[black] (6,-2) circle (2pt);
        \filldraw[black] (1.5,-3) circle (2pt);
        \filldraw[black] (2.5,-3) circle (2pt);
        \filldraw[black] (3.5,-3) circle (2pt);
        \filldraw[black] (4.5,-3) circle (2pt);
        \filldraw[black] (5.5,-3) circle (2pt);
        \filldraw[black] (2,-4) circle (2pt);
        \filldraw[black] (3,-4) circle (2pt);
        \filldraw[black] (4,-4) circle (2pt);
        \filldraw[black] (5,-4) circle (2pt);
        \filldraw[black] (2.5,-5) circle (2pt);
        \filldraw[black] (3.5,-5) circle (2pt);
        \filldraw[black] (4.5,-5) circle (2pt);
        \filldraw[black] (3,-6) circle (2pt);
        \filldraw[black] (4,-6) circle (2pt);
        \filldraw[black] (3.5,-7) circle (2pt);
        
        \draw[black, thick] (1,0) -- (0.5,-1);
        \draw[black, thick] (2,0) -- (1,-2);
        \draw[black, thick] (3,0) -- (2,-2);
        \draw[black, thick] (2.5,-3) -- (2,-2);
        \draw[black, thick] (2.5,-3) -- (1.5,-3);
        \draw[black, thick] (4,0) -- (3,-2);
        \draw[black, thick] (4,0) -- (5.5,-3);
        \draw[black, thick] (5,0) -- (6,-2);
        \draw[black, thick] (6,0) -- (6.5,-1);
        \draw[black, thick] (4.5,-3) -- (4,-2);
        \draw[black, thick] (4.5,-3) -- (3.5,-3);
        \draw[black, thick] (4.5,-3) -- (4,-4);
        \draw[black, thick] (2,-4) -- (5,-4);
        \draw[black, thick] (2.5,-5) -- (4.5,-5);
        \draw[black, thick] (3,-6) -- (4,-6);
        
        \draw (0.5,-0.4) node[anchor=center, scale = 0.7] {0};
        \draw (1.5,-0.4) node[anchor=center, scale = 0.7] {0};
        \draw (2.5,-0.4) node[anchor=center, scale = 0.7] {0};
        \draw (3.5,-0.4) node[anchor=center, scale = 0.7] {0};
        \draw (4.5,-0.4) node[anchor=center, scale = 0.7] {0};
        \draw (5.5,-0.4) node[anchor=center, scale = 0.7] {0};
        \draw (6.5,-0.4) node[anchor=center, scale = 0.7] {0};
        \draw (1,-1.4) node[anchor=center, scale = 0.7] {0};
        \draw (2,-1.4) node[anchor=center, scale = 0.7] {0};
        \draw (3,-1.4) node[anchor=center, scale = 0.7] {0};
        \draw (4,-1.4) node[anchor=center, scale = 0.7] {1};
        \draw (5,-1.4) node[anchor=center, scale = 0.7] {0};
        \draw (6,-1.4) node[anchor=center, scale = 0.7] {0};
        \draw (1.5,-2.4) node[anchor=center, scale = 0.7] {1};
        \draw (2.5,-2.4) node[anchor=center, scale = 0.7] {0};
        \draw (3.5,-2.4) node[anchor=center, scale = 0.7] {1};
        \draw (4.5,-2.4) node[anchor=center, scale = 0.7] {0};
        \draw (5.5,-2.4) node[anchor=center, scale = 0.7] {0};
        \draw (2,-3.4) node[anchor=center, scale = 0.7] {0};
        \draw (3,-3.4) node[anchor=center, scale = 0.7] {1};
        \draw (4,-3.4) node[anchor=center, scale = 0.7] {-1};
        \draw (5,-3.4) node[anchor=center, scale = 0.7] {1};
        \draw (2.5,-4.4) node[anchor=center, scale = 0.7] {0};
        \draw (3.5,-4.4) node[anchor=center, scale = 0.7] {0};
        \draw (4.5,-4.4) node[anchor=center, scale = 0.7] {0};
        \draw (3,-5.4) node[anchor=center, scale = 0.7] {0};
        \draw (4,-5.4) node[anchor=center, scale = 0.7] {0};
        \draw (3.5,-6.4) node[anchor=center, scale = 0.7] {0};

        \draw[blue, dashed] (2,0) -- (4,-4);
        \draw[blue, dashed] (6,0) -- (4,-4);
        \draw[blue, dashed] (2,0) -- (6,0);
        \draw[fill=cyan, nearly transparent]  (2,0) -- (6,0) -- (4,-4) -- cycle;        
    
        \draw[blue, dashed] (1.5,-3) -- (4.5,-3);
        \draw[blue, dashed] (3,-6) -- (4.5,-3);
        \draw[blue, dashed] (1.5,-3) -- (3,-6);
        \draw[fill=cyan, nearly transparent]  (3,-6) -- (1.5,-3) -- (4.5,-3) -- cycle;        
    
        \draw[blue, dashed] (3.5,-3) -- (5.5,-3);
        \draw[blue, dashed] (4.5,-5) -- (5.5,-3);
        \draw[blue, dashed] (3.5,-3) -- (4.5,-5);
        \draw[fill=cyan, nearly transparent]  (3.5,-3) -- (5.5,-3) -- (4.5,-5) -- cycle;        
        
        \end{tikzpicture}
        \caption{Corollary 1}
        \label{Figure 9}
    \end{figure}

\end{document}